
\documentclass[12pt, a4paper]{amsart}
 \usepackage{amsaddr}
 \usepackage{tikz}
 \usetikzlibrary{decorations.markings}
\usepackage[onehalfspacing]{setspace}
 \setlength{\parindent}{0em} 
 \usepackage{stmaryrd}
\usepackage{fullpage}
\usepackage{amsmath}
\usepackage{epigraph}
\usepackage{amsmath, amscd}
\usepackage{IEEEtrantools}
\usepackage{amssymb, tikz}
\usepackage{mathabx}
\usepackage{mathtools}
\usepackage{comment}

\setlength{\textwidth}{\paperwidth}
\addtolength{\textwidth}{-2in}
\calclayout









\newtheorem{thm}{Theorem}[section]
\newtheorem{prop}[thm]{Proposition}
\newtheorem{lem}[thm]{Lemma}
\newtheorem{cor}[thm]{Corollary}




\theoremstyle{definition}

\theoremstyle{definition}




\theoremstyle{remark}

\newtheorem{remark}[thm]{Remark}


\numberwithin{equation}{section}



\def\G{\Gamma}

\newcommand{\R}{\mathbf{R}}  
\newcommand{\Z}{\mathbf{Z}}
\newcommand{\Q}{\mathbf{Q}}

\newcommand{\Sf}{\mathfrak{S}}

\newcommand{\SL}{\mathrm{SL}_2 (\mathbf{Z})}

\newcommand{\llrrparen}[1]{
  \left(\mkern-6mu\left(#1\right)\mkern-6mu\right)}

\newcommand{\floor}[1]{\left \lfloor #1 \right \rfloor}

\newcommand{\uh}{\mathbf{H}}

\newcommand{\sgn}{\mathrm{sign}}

\newcommand\SmallMatrix[4]{{\tiny\arraycolsep=0.3\arraycolsep\ensuremath{\begin{pmatrix}#1 & #2 \\ #3 & #4\end{pmatrix}}}}

\def\XXint#1#2#3{{\setbox0=\hbox{$#1{#2#3}{\displaystyle\displaystyle\int}$ }
\vcenter{\hbox{$#2#3$ }}\kern-.6\wd0}}




\usepackage{blindtext}
\usepackage{soul}


\usepackage{etoolbox}

\makeatletter
\patchcmd{\@setauthors}{\MakeUppercase\@author}{}{}{}
\makeatother

\begin{document}

\title{Two Geometric Interpretations of Hardy Sums}
\author{Alessandro L\"ageler}
\maketitle

\begin{abstract}
    The problem of finding the number of lattice points in a triangle has a classical solution if the lattice is $\Z^2$ and the vertices of the triangle have integer valued coordinates. We consider what happens when we replace the lattice by $(2 \Z)^2$ instead and give an explicit formula for the number of lattice points inside a triangle in terms of Hardy sums. Moreover, we give a second geometric interpretation of the Hardy sums as signed intersection numbers with a certain oriented net of geodesics. Using this geometric realization, we prove a generalized reciprocity law for Hardy sums by an elementary argument. 
\end{abstract}

\section{Introduction}

Problems of counting points with integer coordinates in geometric objects are challenging and well-studied problems in mathematics and give an interesting connection between number theory and geometry (famous examples include counting integer points on the $3$-sphere, the Gauss circle problem or Hardy and Littlewood's work on counting points in a right-angled triangle \cite{hardy}). These problems are typically very hard. However, when one considers two-dimensional polygons whose vertices have integer coordinates, the problem has an elegant and simple solution. 

\begin{thm}[Pick \cite{pick}] \label{pick}
Let $P$ be a polygon whose vertices lie in $\Z^2$. We have $$\mathrm{area}(P) = \#( P^\circ \cap \Z^2) + \frac{1}{2} \# (\partial P \cap \Z^2) + 1,$$
where $P^\circ$ denotes the interior and $\partial P$ denotes the boundary of $P$. 
\end{thm}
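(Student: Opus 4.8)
The plan is to prove that the quantity $\mathrm{Pick}(P) := \#(P^\circ \cap \Z^2) + \tfrac{1}{2}\#(\partial P \cap \Z^2) - 1$ agrees with $\mathrm{area}(P)$ for every lattice polygon, by reducing to a single triangle and then to a single \emph{primitive} triangle. (I take the additive constant to be $-1$, which is what the gluing computation below dictates; a unit square, with no interior and four boundary lattice points, already fixes it.) The first and most useful observation is that both sides are \emph{additive} under gluing: if $P = P_1 \cup P_2$, where $P_1$ and $P_2$ are lattice polygons meeting along a single common edge $e$, then $\mathrm{area}(P) = \mathrm{area}(P_1) + \mathrm{area}(P_2)$ trivially, and I would verify by a direct count that $\mathrm{Pick}(P) = \mathrm{Pick}(P_1) + \mathrm{Pick}(P_2)$ as well. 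The point is that if $e$ carries $k$ lattice points, then the $k-2$ interior points of $e$ migrate from the boundaries of $P_1$ and $P_2$ into the interior of $P$, while the two endpoints of $e$ remain on $\partial P$; tracking this bookkeeping shows that the interior gain and the boundary loss conspire with the constant terms so that the identity holds exactly.

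Granting additivity, I would triangulate $P$ by diagonals into lattice triangles (every simple polygon admits such a triangulation using only its own vertices), so that it suffices to prove the formula for an arbitrary lattice triangle $T$. I would then argue that $T$ can be further subdivided into primitive triangles, i.e.\ lattice triangles whose only lattice points are the three vertices: as long as $T$ contains a lattice point other than its vertices, either in its interior or on an edge, one can cone from that point (or split $T$ along it) to produce strictly smaller lattice triangles, and since the number of enclosed lattice points strictly decreases, the process terminates. By additivity the problem reduces to a single primitive triangle, where $\#(P^\circ \cap \Z^2) = 0$ and $\#(\partial P \cap \Z^2) = 3$, so the claim becomes exactly the assertion that every primitive lattice triangle has area $\tfrac12$.

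The main obstacle is this last geometric fact, which I expect to prove from the lattice structure of $\Z^2$. After translating one vertex to the origin, a primitive triangle is spanned by two vectors $u, v \in \Z^2$, and its area is $\tfrac12\lvert\det(u,v)\rvert$, so it suffices to show $\lvert\det(u,v)\rvert = 1$, i.e.\ that $\{u,v\}$ is a basis of $\Z^2$. The closed parallelogram $\{su + tv : 0 \le s,t \le 1\}$ is the union of the primitive triangle $0,u,v$ and its image under the point reflection $x \mapsto u+v-x$ through the centre $(u+v)/2$; since this reflection preserves $\Z^2$, the reflected triangle also contains no lattice points other than its vertices, and hence the whole parallelogram contains no lattice point besides its four corners. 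By the standard characterization of lattice bases, a fundamental parallelogram with no interior and no non-vertex boundary lattice points is spanned by a basis, forcing $\lvert\det(u,v)\rvert = 1$ and thus area $\tfrac12$. Assembling the three steps — additivity, subdivision into primitive pieces, and the area of a primitive triangle — then yields the theorem for all lattice polygons.
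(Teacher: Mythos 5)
The paper offers no proof of this statement at all---it is quoted as a classical result with only a citation to Pick---so there is nothing internal to compare your argument against; what you have written is the standard textbook proof, and it is essentially correct. One thing you got right deserves emphasis: the constant. The theorem as printed in the paper, with $+1$, is false (your unit-square example already refutes it); the correct constant is the $-1$ you prove, and indeed the paper's own application of the theorem---equation \eqref{pickcount} and the resulting count \eqref{firstlatprob}---is consistent only with the $-1$ version. So the $+1$ is a typo in the paper, and proving the corrected statement, as you do, is the right thing.

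As for the proof itself: the gluing bookkeeping is exactly right (with $k$ lattice points on the common edge one gets $I = I_1 + I_2 + (k-2)$ and $B = B_1 + B_2 - 2k + 2$, and the constants cancel as claimed); the reduction to lattice triangles by diagonal triangulation is standard; the descent to primitive triangles terminates because every sub-triangle produced omits at least one vertex of its parent, so the total lattice-point count strictly drops; and the point-reflection argument showing that the closed parallelogram spanned by $u, v$ contains no lattice points besides its corners, hence that $\{u, v\}$ is a $\Z$-basis and $\vert \det(u, v) \vert = 1$, is correct. The one step you should tighten: when you cone a triangle from an \emph{interior} lattice point $p$, the three resulting triangles cannot be reassembled using only gluings along a \emph{single} common edge---after gluing two of them, the third must be attached along the two-edge path through $p$. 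This is harmless, because your bookkeeping applies verbatim to gluing along any common polygonal arc (the arc's interior lattice points migrate into the interior, its two endpoints stay on the boundary), but the additivity lemma should be stated in that generality for the induction to go through.
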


In this note, we limit ourselves to the case of triangles. For a pair of positive integers $d, c$, let $T(d, c)$ be the triangle in $\R^2$ with vertices $(0, 0)$, $(d, 0)$, and $(0, c)$. The set $T(d, c)$ is the set of points $(x, y) \in \R^2$ such that $0 \leq \frac{x}{d} + \frac{y}{c} < 1$ (we exclude the acute angles and the hypotenuse). For simplicity, we shall assume that $(d, c) = 1$.

By Theorem \ref{pick}, we only need to identify the points on the boundary of $T(d, c)$ if we want to find $\# (T(d, c) \cap \Z^2)$, namely \begin{equation} \label{pickcount}
    \# (T(d, c) \cap \Z^2) = \mathrm{area}(T(d, c)) + \frac{1}{2} \# ( \partial T(d, c) \cap \Z^2 ) - 1.
\end{equation}
As we assumed that $d, c$ are coprime there are no lattice points $(x, y) \in \Z^2$ that satisfy $\frac{x}{d} + \frac{y}{c} = 1$ with both $x, y$ non-zero. Hence, the points on the boundary are given by $(0, 0), (1, 0), (0, 1), ...$, i.e. $\# ( \partial T(d, c) \cap \Z^2 ) = c + d + 1$.

We then find by equation (\ref{pickcount}): \begin{equation} \label{firstlatprob}
    \# ( T(d, c) \cap \Z^2 ) = \frac{1}{2} (c + 1) (d + 1) - 1.
\end{equation}

The problem of finding the lattice points in $T(d, c)$ becomes more difficult if we consider the lattice $(2\Z)^2$ instead of $\Z^2$. Obviously, the problem can be rescaled and solved by Pick's Theorem if both $d, c$ are even. So again, let us assume that $d, c$ are positive coprime integers. In this case, at least one of the vertices of $T(d, c)$ is no longer a lattice point and we cannot use Theorem \ref{pick}.

Here, we shall prove that $\# ( T(d, c) \cap (2\Z^2) )$ can be expressed in terms of the Hardy sums \begin{equation} \label{s4defi}
    S_4(d, c) = \sum_{k = 1}^{c - 1} (-1)^{\floor{\frac{kd}{c}}} \; \; \mathrm{for} \; (d, c) = 1, \; c > 0, \; d \; \mathrm{odd}.
\end{equation}

The Hardy sums $S_4(d, c)$ are integer-valued analogs of the classical Dedekind sums \begin{equation}
    s(d, c) = \sum_{k = 1}^{c - 1} \llrrparen{\frac{k}{c}} \llrrparen{\frac{kd}{c}} \; \; \mathrm{for} \; (d, c) = 1, \; c > 0,
\end{equation}
where $((x)) = x - \floor{x} - \frac{1}{2}$ for $x \notin \Z$ and $((x)) = 0$ for $x \in \Z$. Equivalently, the Dedekind sums $s(d, c)$ are uniquely determined by the following properties: \begin{enumerate}
    \item $s(0, 1) = 0$, 
    \item $s(d + c, c) = s(d, c)$, 
    \item $s(d, c) + s(c, d) = \frac{1}{12} \frac{c^2 + d^2 + 1}{cd} - \frac{1}{4}$ (the reciprocity formula).
\end{enumerate}

One can, in fact, express the Hardy sums $S_4(d, c)$ as a non-trivial linear combination of Dedekind sums, namely \begin{equation} \label{exprhardydede}
    S_4(d, c) = 8 \; s(d, 2c) - 4 \; s(d, c) \; \; \mathrm{for} \; (d, c) = 1, \; d \; \mathrm{odd}. 
\end{equation}
Equation (\ref{exprhardydede}) can be proved in an elementary way \cite{sitara} and is well-understood in the context of Eisenstein cohomology \cite[Ch. 2]{stevens}. 

The expression of $\# ( T(d, c) \cap (2 \Z)^2 )$ for $(d, c) = 1$ in terms of $S_4(d, c)$ is given in the next theorem. 

\begin{thm} \label{mainthm}
Let $d, c$ be positive coprime integers. \begin{enumerate}
    \item If $d, c$ are both odd, then $$\# ( T(d, c) \cap (2\Z)^2) = \frac{1}{8} (cd + 2d + 2c + S_4(d, c) + S_4(c, d)).$$
    \item If $d$ is even, then $$\# ( T(d, c) \cap (2\Z)^2)  = \frac{1}{4}\left(c + d + \frac{cd}{2} - \frac{1}{2cd} + S_4(c, d / 2) \right).$$
\end{enumerate}
\end{thm}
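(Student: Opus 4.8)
The plan is to count the even lattice points by harmonic analysis on $(\Z/2\Z)^2$. Writing a point of $\Z^2$ as $(x,y)$, the indicator that both coordinates are even equals $\frac14(1+(-1)^x)(1+(-1)^y)$, so that
\[
\#\big(T(d,c)\cap(2\Z)^2\big)=\frac14\left(A+B_x+B_y+B_{xy}\right),
\]
where $A=\#\big(T(d,c)\cap\Z^2\big)$ and $B_x,B_y,B_{xy}$ are the character sums $\sum(-1)^x$, $\sum(-1)^y$ and $\sum(-1)^{x+y}$ taken over $T(d,c)\cap\Z^2$. The value of $A$ is already recorded in (\ref{firstlatprob}), so the whole problem reduces to evaluating the three alternating sums and to controlling how they depend on the parities of $c$ and $d$.

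First I would evaluate $B_x$ by slicing $T(d,c)$ into horizontal rows. For fixed height $y$ the admissible $x$ run over $0\le x<d(1-y/c)$, and the inner sum $\sum_x(-1)^x$ telescopes to $1$ or $0$ according to the parity of the number $\lceil d(c-y)/c\rceil$ of such $x$. Hence $B_x$ simply counts the rows of odd length. Re-indexing by $j=c-y$ and using $(d,c)=1$ to rewrite the ceiling as $\floor{jd/c}+1$, the parity of the $j$-th row is governed precisely by $(-1)^{\floor{jd/c}}$, so the defining sum (\ref{s4defi}) collapses the count into a closed form in $S_4(d,c)$; by symmetry $B_y$ is the corresponding expression in $S_4(c,d)$. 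The same telescoping applied to $B_{xy}$ should reduce it to a single explicit constant. Substituting these evaluations and $A$ into the displayed identity yields case (1), where $c,d$ are both odd so that the vertex $(d,0)$, the vertex $(0,c)$ and the hypotenuse carry no even lattice points.

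For case (2) the same decomposition holds verbatim, but the arithmetic of the character sums changes because $d$ is now even: the boundary rows at $j=c$ and $i=d$ switch parity, and the count that produced $S_4(d,c)$ is no longer a Hardy sum at all, since (\ref{s4defi}) is only defined for an odd first argument. The hard part will be re-expressing this parity count---morally $\sum_j(-1)^{\floor{jd/c}}$ with $d$ even---in terms of the genuine Hardy sum $S_4(c,d/2)$. I would route this through Dedekind sums: write the count as a combination of sawtooth sums $\sum\llrrparen{\frac{jd}{c}}$ over a fixed residue class, recognise these as Dedekind sums for the moduli $c$ and $2c$ (equivalently $d/2$ and $d$), and then use the identity (\ref{exprhardydede}), $S_4(d,c)=8\,s(d,2c)-4\,s(d,c)$, together with Dedekind reciprocity to fold them into a single $S_4(c,d/2)$; the rational term $-\tfrac{1}{2cd}$ is exactly the residue of that reciprocity. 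This conversion from a sum with even multiplier to a bona fide Hardy sum, carried out while keeping the half-open boundary conventions consistent through every parity split, is the main obstacle; by comparison case (1) is essentially mechanical.
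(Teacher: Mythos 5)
Your two-dimensional approach is genuinely different from the paper's. The paper lifts the problem into $\R^3$: it identifies $\#(T(d,c)\cap(2\Z)^2)$ with the lattice points of the tetrahedron $D(d,c,2)$ lying on the plane $z=1$, computes $\#(D(d,c,2)\cap\Z^3)$ by Mordell's theorem (with the ad hoc replacement, Proposition \ref{mordell2}, when $d$ is even, since $d,c,2$ are then not pairwise coprime), subtracts the $z=0$ count coming from (\ref{firstlatprob}), and only then converts Dedekind sums into Hardy sums via reciprocity (Lemmas \ref{hardybothodd} and \ref{hardyoneeven}). Your decomposition $\frac14(A+B_x+B_y+B_{xy})$ with row-by-row telescoping never leaves the plane, needs neither Mordell's theorem nor (in case (1)) any Dedekind sums at all, and makes $S_4$ appear with its bare definition as a parity count of row lengths: for $d,c$ odd one gets $B_x=\tfrac12(c+1+S_4(d,c))$, $B_y=\tfrac12(d+1+S_4(c,d))$ and $B_{xy}=1$, the last by the pairing $j\mapsto c-j$ (which uses that $c+d$ is even; this is the one step you wave at with ``a single explicit constant'' and should write out).

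The genuine gap is your unverified claim that substitution ``yields case (1)''. It cannot: assembling your own evaluations gives
\[
\#(T(d,c)\cap(2\Z)^2)=\tfrac18\bigl(cd+2c+2d+3+S_4(d,c)+S_4(c,d)\bigr),
\]
which differs from the stated theorem by the constant $3$. The fault here lies with the statement, not your method: for $c=d=1$ the stated formula gives $\tfrac58$, not even an integer, while the true count is $1$; one checks that the paper's own proof (Mordell plus Lemma \ref{hardybothodd}) also produces the $+3$. The situation in case (2) is worse for your proposal, because there you try to engineer the term $-\tfrac1{2cd}$ out of Dedekind reciprocity. That is impossible on your own terms: your identity writes the count as $\tfrac14$ times a sum of four integers, so no term with denominator $2cd$ can survive. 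Indeed that term is spurious (for $c=1$, $d=2$ the stated formula gives $\tfrac{15}{16}$ for a count of $1$); it stems from Proposition \ref{mordell2}, whose claim that $2cx+2dy+cdz$ runs over all residues mod $2cd$ fails -- every such value is even when $d$ is even -- so its term $\tfrac1{24cd}$ should read $\tfrac1{6cd}$. The statement your method actually proves is $\#(T(d,c)\cap(2\Z)^2)=\tfrac14\bigl(c+d+\tfrac{cd}{2}+S_4(c,d/2)\bigr)$.

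Your roadmap for case (2) is also misdirected about where the Hardy sum comes from. With $d$ even, the row sum you worry about, $\sum_{j=1}^{c-1}(-1)^{\floor{jd/c}}$, needs no conversion: it vanishes identically, since the pairing $j\mapsto c-j$ now reverses signs; hence $B_x=\tfrac12(c-1)$. The Hardy sums enter through the column sum, $B_y=\tfrac12(d+1+S_4(c,d))$, and the diagonal sum, $B_{xy}=\tfrac12(1-S(c,d))$ with $S$ as in (\ref{sdefi}). What you then need is the identity $S_4(c,d)-S(c,d)=2\,S_4(c,d/2)$, which is immediate without any reciprocity:
\[
S_4(c,d)-S(c,d)=\sum_{k=1}^{d-1}(-1)^{\floor{kc/d}}\bigl(1+(-1)^{k}\bigr)=2\sum_{m=1}^{d/2-1}(-1)^{\floor{mc/(d/2)}}.
\]
With these repairs your argument closes, is simpler than the paper's, and has the added value of exposing the two misprints in Theorem \ref{mainthm}.
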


The proof of Theorem \ref{mainthm} relies on a theorem of Mordell \cite{mordell}, which we will present in the next section.

\medskip

The properties of Dedekind sums can be explained from a cohomological viewpoint as well. Consider the integer-valued Dedekind symbol \begin{equation} \label{phidefi}
    \Phi\left( \SmallMatrix{a}{b}{c}{d} \right) = \begin{cases} b, &\mathrm{for} \; c = 0, \\ \frac{a + d}{c} - 12 \; s(d, c), &\mathrm{for} \; c \neq 0. \end{cases}
\end{equation}
Asai \cite{asai} constructed the Dedekind symbol as a splitting of the cohomology class in $H^2(\mathrm{SL}_2(\R), \R)$ associated with the central extension of the universal covering group. The space $H^2(\mathrm{SL}_2(\R), \R)$ is a one-dimensional $\R$-vector space and generated by the cocycle \begin{equation} \label{radecocycledefi}
    w(A, B) = - \sgn(c_A c_B c_{AB}) 
\end{equation}
for $A = \SmallMatrix{*}{*}{c_A}{*}, \; B = \SmallMatrix{*}{*}{c_B}{*} \in \SL$,  and $AB = \SmallMatrix{*}{*}{c_{AB}}{*}$. Following the terminology of Kirby and Melvin \cite{kirbymelvin}, we call $w(A, B)$ the \textit{Rademacher cocycle}. Asai's "splitting" refers to the fact that there exists a unique function $\Phi : \SL \to \R$ such that $$w(A, B) = \Phi(AB) - \Phi(A) - \Phi (B),$$
which is exactly the Dedekind symbol $\Phi$ in (\ref{phidefi}). The uniqueness of $\Phi$ follows easily from $H^1(\SL, \R) = \{ 0 \}$. The reciprocity law for Dedekind sums corresponds to the case where $B = S = \SmallMatrix{0}{-1}{1}{0}$.

The Hardy sums $S_4(d, c)$, defined in (\ref{s4defi}), and \begin{equation} \label{sdefi}
    S(d, c) = \sum_{k = 1}^{\vert c \vert - 1} (-1)^{\floor{\frac{kd}{c}} + k + 1}, \; \; (d, c) = 1, \; c + d \; \mathrm{odd},
\end{equation}
however, need to be viewed in terms of the cohomology of certain subgroups of the modular group, namely \begin{align*}
    &\G^0(2) = \left\{ \SmallMatrix{a}{b}{c}{d} \in \SL : b \equiv 0 \pmod 2 \right\} \; \mathrm{resp.} \\
    &\G_\theta = \left\{ \SmallMatrix{a}{b}{c}{d} \in \SL : a \equiv d \pmod 2, \; b \equiv c \pmod 2 \right\}.
\end{align*}
Clearly, we can view the Hardy sums as functions on the subgroups $\G_\theta$ and $\G^0(2)$ directly by defining $S(A) = S(d, c)$ for $A = \SmallMatrix{*}{*}{c}{d} \in \G_\theta$ with $c \neq 0$. For the subgroups $\G_\theta$ and $\G^0(2)$, the first cohomology group is non-trivial.

Hardy sums appear as the correction factors in the transformation law of the logarithms of $\theta$-functions, so can be studied from an automorphic point of view. In section \ref{geomconstsec}, however, we will show that these Hardy sums can be defined as a signed intersection number, without any reference to automorphic forms; see Theorem \ref{geomchar}. We use this geometric interpretation to prove by an elementary argument that, say, $S(A)$ on $\G_\theta$, satisfies $$S(AB) - S(A) - S(B) = w(A, B)$$ for $A, B \in \G_\theta$, where $w(A, B)$ is the Rademacher cocycle (\ref{radecocycledefi}). Equivalently, this identifies the Hardy sums as restrictions of the Dedekind symbol $\Phi$ to the subgroups $\G_\theta$ resp. $\G^0(2)$ up to addition by a homomorphism.

\begin{thm} \label{mainthm1}
The functions $\Phi - S : \G_\theta \to \Z$ and $\Phi - S_4 : \G^0(2) \to \Z$ are homomorphisms. 
\end{thm}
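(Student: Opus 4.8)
The plan is to exhibit both $\Phi$ and each Hardy sum as primitives (in the coboundary sense) of the \emph{same} Rademacher cocycle $w$, and then subtract. Asai's construction recalled above gives, for all $A, B \in \SL$,
$$\Phi(AB) - \Phi(A) - \Phi(B) = w(A, B),$$
and in particular this holds whenever $A, B$ lie in the subgroup $\G_\theta$ (resp. $\G^0(2)$), since restriction of an identity valid on all of $\SL$ costs nothing. Thus, writing $f = \Phi - S$ on $\G_\theta$, one computes
$$f(AB) - f(A) - f(B) = w(A,B) - \bigl( S(AB) - S(A) - S(B) \bigr),$$
so $f$ is a homomorphism into $\Z$ \emph{precisely} when $S$ satisfies the identical cocycle relation $S(AB) - S(A) - S(B) = w(A,B)$ on $\G_\theta$; the same reduction applies verbatim to $\Phi - S_4$ on $\G^0(2)$. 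Hence the entire content of the theorem is the assertion that the Hardy sums themselves split $w$ over their respective subgroups.

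The substantive step — and the one I expect to be the main obstacle — is therefore proving this cocycle relation for $S$ and $S_4$ directly, for which I would invoke the geometric description of Theorem \ref{geomchar}. The idea is to read $S(A)$ off as the signed count of intersections of a distinguished geodesic attached to $A$ (for instance the oriented geodesic from a fixed base point to its $A$-translate, or the ray from $\infty$ to $A\infty = a/c$ in $\uh$) with the oriented net constructed in Section \ref{geomconstsec}. Under the group law, the geodesic attached to the product $AB$ is homotopic to a concatenation of the geodesic attached to one factor with a $\G_\theta$-translate of the geodesic attached to the other; since the net is invariant under the subgroup, the translated crossings retain their signs, so the signed intersection numbers are additive up to the contribution at the junction point of the concatenation. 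The only discrepancy between $S(AB)$ and $S(A) + S(B)$ then comes from this junction — a single crossing whose sign is dictated by the relative position of the endpoints — and one must verify that it equals exactly $-\sgn(c_A c_B c_{AB}) = w(A,B)$. Making this last bookkeeping rigorous is the crux: it requires a careful case analysis on the signs of $c_A$, $c_B$, $c_{AB}$ together with a correct treatment of the degenerate cases $c_A c_B c_{AB} = 0$ (where $w = 0$ and the relevant geodesic does not wind around the cusp).

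Once the cocycle relation for the Hardy sums is in hand, the theorem is immediate from the subtraction above: $(\Phi - S)(AB) = (\Phi - S)(A) + (\Phi - S)(B)$ for all $A, B \in \G_\theta$, and likewise $(\Phi - S_4)(AB) = (\Phi - S_4)(A) + (\Phi - S_4)(B)$ for all $A, B \in \G^0(2)$, so both differences are homomorphisms into $\Z$. A final sanity check I would carry out is consistency on the upper-triangular elements (where $c = 0$): there $\Phi = b$ and the Hardy sums vanish by convention, so each difference restricts to the additive character $A \mapsto b$ on that subgroup, which both fixes the normalization and confirms that the resulting homomorphisms are indeed $\Z$-valued.
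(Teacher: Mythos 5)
Your opening reduction is exactly the paper's: both $\Phi$ and $\Sf$ (resp.\ $\Sf_4$) are to be exhibited as splittings of the Rademacher cocycle on the relevant subgroup, and Theorem \ref{mainthm1} follows by subtraction; your if-and-only-if observation and the handling of the upper-triangular elements are fine. So the entire weight of the proof falls, as you say, on the relation $\Sf(AB)-\Sf(A)-\Sf(B) = w(A,B)$ on $\G_\theta$ (the paper's Theorem \ref{radecocthm}).

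There is, however, a genuine gap in your sketch of that relation, and it is not mere bookkeeping. Your claim that ``since the net is invariant under the subgroup, the translated crossings retain their signs'' is false: the \emph{unoriented} net is $\G_\theta$-invariant, but the orientations are assigned by comparing denominators, and this rule is not $\G_\theta$-equivariant. By Lemma \ref{orientationlem} and Remarks \ref{whathappens}, \ref{orientationA}, an element $A \in \G_\theta$ preserves orientations \emph{except} on the doubly-oriented geodesics joining two integers and on their $A$-preimages. This matters because your other claim -- that the discrepancy comes from the junction of the concatenation -- also fails: for an ideal triangle whose vertices are $\G_\theta$-translates of $i\infty$ (hence never endpoints of net geodesics, since those endpoints have both numerator and denominator odd), every net geodesic crosses either zero or two sides, with opposite signs, so the three sides have total signed intersection $0$; this exact triangle identity is what the paper uses in Lemma \ref{helpprop}. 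Consequently, if the oriented net really were invariant, your concatenation argument would yield $\Sf(AB) = \Sf(A)+\Sf(B)$ with no correction term at all -- contradicting reciprocity, since $\Sf(SA) - \Sf(A) = \pm\sgn(ac) \neq 0$ while $\Sf(S)=0$. In the paper the term $-\sgn(c_A c_B c_{AB})$ arises precisely from the orientation anomaly, not from the junction: the geodesic $h(B.i\infty, i\infty)$ crosses exactly one doubly-oriented geodesic $\tilde g$, which contributes $0$ to its intersection number, whereas the image $A.\tilde g$ is singly oriented and contributes $\sgn(c_{AB})$ to the intersection number of $A.h(B.i\infty, i\infty)$. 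Pinning down that sign, and reducing arbitrary $A, B$ to a configuration where this analysis applies (Lemma \ref{reducelem}, Proposition \ref{invprop}, Lemma \ref{wordlem}, and the word-induction closing the proof of Theorem \ref{radecocthm}), is the actual content of the argument; your proposal aims the case analysis at the wrong crossing, so the crux is missing.
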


We would like to remark that in an upcoming paper of Burrin and von Essen \cite{burvoness}, Burrin's generalized Dedekind sums \cite{bur1, bur2} are similarly realized as winding numbers of closed geodesics on the modular surface around the cusp $i \infty$.

\section{Proof of Theorem \ref{mainthm}}

To give a closed expression of $\# ( T(d, c) \cap (2\Z)^2)$, we consider the problem as a three-dimensional one. Namely, we reformulate \begin{align} \label{reformulateprob}
    \begin{split}
    \# ( T(d, c) \cap (2\Z)^2) &=  \# \left\{ (x, y) \in \Z^2 : 0 \leq \frac{2x}{d} + \frac{2y}{c} < 1 \right\} \\
    &= \# \left\{ (x, y) \in \Z^2 : 0 \leq \frac{x}{d} + \frac{y}{c} + \frac{1}{2} < 1 \right\}.
    \end{split}
\end{align}

For positive integers $u, v, w$, let $D(u, v, w)$ be the tetrahedron in $\R^3$ with vertices $(u, 0, 0)$, $(0, v, 0)$, and $(0, 0, w)$, i.e. $$D(u, v, w) = \left\{ (x, y, z) \in \R^3 : 0 < \frac{x}{u} + \frac{y}{v} + \frac{z}{w} < 1 \right\}.$$ 
Note that we excluded the vertices in our definition of $D(u, v, w)$.

Counting $\# ( T(d, c) \cap (2\Z)^2)$ is equivalent to counting the lattice points in $\Z^3$ of the tetrahedron $D(d, c, 2)$ with vertices $(d, 0, 0)$, $(0, c, 0)$, and $(0, 0, 2)$ intersected with the plane $\{(x, y, z) \in \R^3 : z = 1 \}$ by (\ref{reformulateprob}). Geometrically, this corresponds to projecting $T(d, c) \cap (2 \Z)^2$ onto the plane $\{ (x, y, z) \in \R^3 : z = 1 \}$ with base point $(0, 0, 2)$.

Mordell expressed $\# (D(u, v, w) \cap \Z^3)$ in a pretty closed form in terms of Dedekind sums, which we will use for our solution to the problem. 

\begin{thm}[Mordell] \label{mordell}
Let $u, v, w$ be pairwise coprime positive integers. We have \begin{align*}
    \# (D(u, v, w) \cap \Z^3) &= \frac{uvw}{6} + \frac{1}{4} (uv + uw + vw) + \frac{1}{4} (u + v + w) + \frac{1}{12} \left( \frac{uv}{w} + \frac{uw}{v} + \frac{vw}{u} \right) \\
    &+ \frac{1}{12 uvw} - 2 - (s(uv, w) + s(uw, v) + s(vw, u)).
\end{align*}
\end{thm}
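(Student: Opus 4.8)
The plan is to count the lattice points of $D(u,v,w)$ by realizing them as the non-negative integer solutions of a single linear inequality and extracting a coefficient from an associated rational generating function. Clearing denominators, a point $(x,y,z)\in\Z_{\ge 0}^3$ lies in $D(u,v,w)$ precisely when $0<vw\,x+uw\,y+uv\,z<uvw$; since $u,v,w$ are pairwise coprime the only non-negative solutions of $vw\,x+uw\,y+uv\,z=uvw$ are the three far vertices, which the strict inequality already discards, so it suffices to count solutions with $vw\,x+uw\,y+uv\,z\le uvw-1$ and remove the origin. Introducing the slack variable attached to the factor $1/(1-t)$, this gives
\[
  \#\bigl(D(u,v,w)\cap\Z^3\bigr)=\bigl[t^{\,uvw-1}\bigr]\frac{1}{(1-t)(1-t^{vw})(1-t^{uw})(1-t^{uv})}-1 .
\]

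I would then evaluate this coefficient by residues, writing it as $\tfrac{1}{2\pi i}\oint \frac{dt}{t^{uvw}(1-t)(1-t^{vw})(1-t^{uw})(1-t^{uv})}$ over a small circle about the origin and expanding the contour; since the integrand decays at infinity, the coefficient equals minus the sum of the residues at the remaining poles, all of which are roots of unity. The pole at $t=1$ has order four, and its contribution is a routine but delicate Laurent computation: substituting $t=e^{-s}$ and using $\tfrac{as}{1-e^{-as}}=\sum_{n\ge0}\tfrac{B_n}{n!}(as)^n$ for each factor, the Bernoulli numbers ($B_2=\tfrac16$ in particular) produce, after the overall sign, exactly the rational part $\tfrac{uvw}{6}+\tfrac14(uv+uw+vw)+\tfrac14(u+v+w)+\tfrac1{12}\bigl(\tfrac{uv}{w}+\tfrac{uw}{v}+\tfrac{vw}{u}\bigr)+\tfrac1{12uvw}$, while the constant contributions, together with the $-1$ subtracted above, combine to the $-2$.

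The remaining residues sit at the nontrivial roots of unity, and here the Dedekind sums appear. Pairwise coprimality is what makes this tractable: a root of order $d>1$ can satisfy at most two of $\zeta^{vw}=\zeta^{uw}=\zeta^{uv}=1$, and the roots organize naturally according to whether their order divides $w$, $v$, or $u$. Summing the residues over all roots whose order divides $w$ produces a finite trigonometric sum which, via the cotangent representation $s(a,b)=\tfrac1{4b}\sum_{k=1}^{b-1}\cot\tfrac{\pi k}{b}\cot\tfrac{\pi k a}{b}$, I expect to identify with $-s(uv,w)$ (here $\zeta^{uv}$ runs over the primitive $w$-th roots because $\gcd(uv,w)=1$), and cyclically for $v$ and $u$. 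The main obstacle is precisely this identification: the shared roots force one to treat double poles rather than simple ones, so the contribution of each order-$w$ root is a derivative expression that must be resummed into the clean product-of-cotangents shape of $s(uv,w)$, all while keeping scrupulous track of signs and of the arguments taken modulo $w$. Assembling the order-four residue with the three Dedekind-sum contributions then yields Mordell's formula.
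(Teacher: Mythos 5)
Your reduction of the count to
\[
\#\bigl(D(u,v,w)\cap\Z^3\bigr)=\bigl[t^{\,uvw-1}\bigr]\frac{1}{(1-t)(1-t^{vw})(1-t^{uw})(1-t^{uv})}-1
\]
is correct, and the residue strategy is viable in principle (it is essentially how Beck and Robins derive the Mordell--Pommersheim formula; note that the paper does not reprove Theorem \ref{mordell} at all but cites Mordell, whose elementary sawtooth-sum method is the one adapted in the proof of Proposition \ref{mordell2}). The gap is in your inventory of poles. A nontrivial root of unity $\zeta$ of order $d$ is a pole whenever $d$ divides one of $vw$, $uw$, $uv$. Pairwise coprimality gives that $d$ divides \emph{two} of these products exactly when $d$ divides one of $u$, $v$, $w$; those are your double poles. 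But whenever two of $u,v,w$ exceed $1$ there are also roots whose order divides one product while dividing neither of its factors (e.g.\ $d=6$ when $(v,w)=(2,3)$): these are simple poles of the integrand, they belong to none of your three classes, and their contribution does not vanish. Concretely, for $(u,v,w)=(1,2,3)$ the residues at the two primitive sixth roots of unity sum to $1/12$, so they shift the count by $-1/12$; an argument that ignores them proves a false formula.

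Moreover, the term-by-term identification you predict fails, so the ``main obstacle'' is mislocated. In the same example $(u,v,w)=(1,2,3)$ one computes $-\mathrm{Res}_{t=1}=2387/432$, while the rational part of Mordell's formula (including the $-2$, and after subtracting your extra $1$) is $71/18=1704/432$; and the residues at the primitive cube roots of unity sum to $23/108$, so their contribution to the count is $-23/108$, whereas $-s(uv,w)=-s(2,3)=1/18$. Thus the fourth-order pole does not produce exactly the rational part, and the order-$w$ poles do not produce exactly the Dedekind sum: each double-pole class yields a root-of-unity sum of Fourier--Dedekind type, which equals a classical Dedekind sum only up to rational correction terms, and these corrections, together with the mixed-order simple poles above, must be shown to reassemble precisely into the polynomial part of the formula. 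In the example the four discrepancies are $251/432$, $-116/432$, $-99/432$, $-36/432$, which cancel only in the aggregate. Proving this cancellation in general---say by converting each class of residues into Fourier--Dedekind sums and invoking their known relation to $s(\cdot,\cdot)$---is the substantive part of the proof, and it is absent from your outline.
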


\begin{remark}
One should note that the right hand side in Theorem \ref{mordell} is computationally easier than the left hand side. Indeed, if one were to calculate the left hand side without exploiting symmetries, the computational complexity would be $\mathcal{O}(uvw)$. The right hand side, however, is much faster to compute, as Dedekind sums can be computed through the Euclidean algorithm (see the reciprocity formula in the introduction).
\end{remark}

\subsection{Proof of the First Part of Theorem \ref{mainthm}} \label{firstpart}

Let $d, c$ be odd and coprime integers. By Theorem \ref{mordell} we have \begin{align} \label{n3first}
    \begin{split}
    \# (D(d, c, 2) \cap \Z^3) &= \frac{cd}{3} + \frac{1}{4} (2c + 2d + cd) + \frac{1}{4} (c + d + 2) + \frac{1}{12} \left( \frac{2c}{d} + \frac{2d}{c} + \frac{cd}{2} \right) \\
    &+ \frac{1}{24cd} - 2 - (s(2d, c) + s(2c, d) + s(cd, 2)).
    \end{split}
\end{align}
Since $cd$ is odd, we have $s(cd, 2) = s(1, 2) = 0$. 

We split the number of lattice points in $D(d, c, 2)$ into those contained in the plane $\{(x, y, z) \in \R^3  : z = 0 \}$ and those contained in the plane $\{(x, y, z) \in \R^3  : z = 1 \}$. The latter is, as we already said, equal to $\# ( T(d, c) \cap (2\Z)^2 )$. The former is equal to $$\# \left\{ (x, y) \in \Z^2 : 0 < \frac{x}{d} + \frac{y}{c} < 1 \right\} = \frac{1}{2} (c + 1) (d + 1) - 2$$
by (\ref{firstlatprob}) with the exclusion of the point $(0, 0)$. 

To write $\# ( T(d, c) \cap (2\Z)^2)$ in terms of Hardy sums, we use the following easy lemma.

\begin{lem} \label{hardybothodd}
Let $d, c$ be odd and coprime integers. We have \begin{equation*} \label{hardysumsdede}
    s(2c, d)  + s(2d, c) = \frac{4c^2 + 4d^2 + 1}{24cd}  - \frac{1}{8} (S_4(d, c) + S_4(c, d)) - \frac{3}{8}.
\end{equation*}
\end{lem}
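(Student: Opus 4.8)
The plan is to reduce the identity to the classical Dedekind reciprocity law (property (3) in the introduction) by way of the linear relation (\ref{exprhardydede}) between Hardy and Dedekind sums. Since $d$ is odd and $(d,c)=1$, equation (\ref{exprhardydede}) gives $S_4(d,c) = 8\,s(d,2c) - 4\,s(d,c)$, and by the symmetric hypothesis ($c$ odd, $(c,d)=1$) it also gives $S_4(c,d) = 8\,s(c,2d) - 4\,s(c,d)$. Adding these, I obtain
$$S_4(d,c) + S_4(c,d) = 8\big(s(d,2c) + s(c,2d)\big) - 4\big(s(d,c) + s(c,d)\big),$$
so the entire statement will follow once I control the four Dedekind sums on the right.

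Next I would apply reciprocity to each term. The key point is that $d$ odd and $(d,c)=1$ force $(d,2c)=1$, so property (3) applies and yields $s(d,2c) = \frac{4c^2+d^2+1}{24cd} - \frac14 - s(2c,d)$; symmetrically $s(c,2d) = \frac{c^2+4d^2+1}{24cd} - \frac14 - s(2d,c)$. A third application of reciprocity handles the untwisted pair, $s(d,c)+s(c,d) = \frac{c^2+d^2+1}{12cd} - \frac14$. Substituting these three expressions into the displayed sum above replaces $s(d,2c)+s(c,2d)$ by $-\big(s(2c,d)+s(2d,c)\big)$ plus explicit rational functions of $c,d$, producing exactly the two Dedekind sums that appear on the left-hand side of the lemma.

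The remaining work is purely arithmetic bookkeeping: collecting the rational terms. After multiplying by $8$, the twisted reciprocities contribute $\frac{5c^2+5d^2+2}{3cd} - 4$, while the untwisted one, multiplied by $-4$, contributes $-\frac{c^2+d^2+1}{3cd} + 1$; these combine to $\frac{4c^2+4d^2+1}{3cd} - 3$. This leaves
$$S_4(d,c)+S_4(c,d) = \frac{4c^2+4d^2+1}{3cd} - 3 - 8\big(s(2c,d)+s(2d,c)\big),$$
and dividing by $8$ and rearranging gives precisely the claimed formula. I do not anticipate a genuine obstacle here; the only points requiring care are verifying the coprimality condition $(d,2c)=1$ — which is where the oddness hypothesis is essential, so that reciprocity is legitimately applicable — and tracking the constant terms, since a single sign slip or factor-of-two error in the $-\frac14$ contributions or in the modulus $2cd$ would corrupt the final rational part.
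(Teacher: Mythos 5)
Your proposal is correct and follows essentially the same route as the paper's own proof: expand both Hardy sums via equation (\ref{exprhardydede}), apply Dedekind reciprocity to the pairs $s(d,2c)+s(2c,d)$, $s(c,2d)+s(2d,c)$, and $s(d,c)+s(c,d)$, and collect the rational terms to get $S_4(d,c)+S_4(c,d) = \frac{4c^2+4d^2+1}{3cd} - 3 - 8\bigl(s(2c,d)+s(2d,c)\bigr)$ before dividing by $8$. Your explicit check that oddness of $d$ and $c$ guarantees $(d,2c)=(c,2d)=1$, so that reciprocity legitimately applies, is a point the paper leaves implicit.
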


\begin{proof}
We use formula (\ref{exprhardydede}) for the Hardy sum $S_4(d, c)$. A calculation yields \begin{align*}
    S_4(d, c) + S_4(c, d) &= 8s(d, 2c) - 4 s(d, c) + 8s(c, 2d) - 4 s(c, d) \\
    &= 8 (s(d, 2c) + s(c, 2d)) - 4(s(d, c) + s(c, d))\\
    &= 8\left(- s(2c, d) - s(2d, c) + \frac{1}{12} \frac{4c^2 + d^2 + 1}{2cd} + \frac{1}{12} \frac{c^2 + 4d^2 + 1}{2cd} - \frac{1}{2} \right) \\
    &- 4 \left( \frac{1}{12} \frac{c^2 + d^2 + 1}{cd} - \frac{1}{4} \right) \\
    &= \frac{4c^2 + 4d^2 + 1}{3cd} - 3 - 8\left(s(2c, d)  + s(2d, c) \right),
\end{align*}
where we used the reciprocity formula twice. 
\end{proof}

Putting (\ref{n3first}) and Lemma \ref{hardybothodd} together yields the first part of Theorem \ref{mainthm}.

\subsection{Proof of the Second Part of Theorem \ref{mainthm}}

Let $d, c$ be coprime integers and $d$ an even integer. As in section \ref{firstpart}, we would like to apply Mordell's Theorem \ref{mordell} to find $\# ( T(d, c) \cap (2 \Z)^2)$. However, Mordell's Theorem \ref{mordell} is no longer true if the integers $u, v, w$ are not pairwise coprime. We hence need to adjust the statement. 

\begin{prop} \label{mordell2}
Let $d, c$ be coprime integers and suppose that $d$ is even and $c$ is odd. The number of lattice points in the tetrahedron $D(d, c, 2)$ is given by \begin{align*}
    \# (D(d, c, 2) \cap \Z^3) &= \frac{cd}{3} + \frac{1}{4} (2 c + 2 d + c d) + \frac{1}{4} (c + d + 2) \\
    &+ \frac{1}{12}\left( \frac{2c}{d} + \frac{2d}{c} + \frac{cd}{2}\right) + \frac{1}{24cd} - \frac{5}{2} - (2 s(d, c) + s (2 c, d)).
\end{align*} 
\end{prop}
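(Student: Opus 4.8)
The obstruction is that Theorem~\ref{mordell} presupposes pairwise coprimality, whereas now $\gcd(d,2)=2$. Rather than retracing Mordell's argument, my plan is to compute $\#(D(d,c,2)\cap\Z^3)$ directly by slicing the tetrahedron with the horizontal planes $z=\text{const}$. Every lattice point of $D(d,c,2)$ has $x,y\ge 0$ and $0\le z/2<1$, so only the slices $z=0$ and $z=1$ contribute, and
\[
\#(D(d,c,2)\cap\Z^3)=A+B,\qquad A=\#\Bigl\{(x,y)\in\Z_{\ge 0}^2:0<\tfrac{x}{d}+\tfrac{y}{c}<1\Bigr\},
\]
while $B=\#\{(x,y)\in\Z_{\ge 0}^2:\tfrac{x}{d}+\tfrac{y}{c}<\tfrac12\}$ is the $z=1$ slice. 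The first count is elementary: since $(d,c)=1$ still holds, (\ref{firstlatprob}) applies unchanged, and discarding the origin gives $A=\tfrac12(c+1)(d+1)-2$.

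All the work is in $B$, the number of lattice points of the right triangle with legs $d/2$ and $c/2$. Because $c$ is odd the vertex $(0,c/2)$ is not a lattice point, so Pick's theorem is unavailable; on the other hand $cd$ is odd, so the hypotenuse $2cx+2dy=cd$ contains no lattice points, which keeps the boundary bookkeeping clean. I would evaluate $B$ by fixing $y$ and counting admissible $x$; using $d/2\in\Z$, the fractional parts collapse and the count reduces to a single half-range floor sum,
\[
B=\frac{d(c+1)}{4}-\sum_{y=1}^{(c-1)/2}\floor{\frac{dy}{c}}.
\]

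Writing $\floor{dy/c}=\tfrac{dy}{c}-\tfrac12-((dy/c))$ turns the remaining sum into the half-range sawtooth sum $\sum_{y=1}^{(c-1)/2}((dy/c))$ --- precisely the kind of sum that separates Hardy sums from Dedekind sums, cf.~(\ref{exprhardydede}). I would express it through Dedekind sums by passing from the modulus $c$ to the modulus $2c$, obtaining $s(d,c)$ together with a sum to which reciprocity may be applied to produce $s(2c,d)$; assembling $A+B$ and collecting the rational terms then gives the asserted closed form, with Dedekind part $2s(d,c)+s(2c,d)$.

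The step I expect to be genuinely delicate is the evaluation of this half-range sum, and with it the origin of the coefficient $2$ in front of $s(d,c)$ and of the constant $-\tfrac52$. Both are forced by the shared factor $\gcd(2c,d)=2$: the Dedekind sums that occur ($s(2c,d)$, and $s(d,2c)$ in intermediate steps) have non-coprime arguments, so ordinary reciprocity fails, and one must split the summation according to the parity of the index --- equivalently, move between the moduli $c$ and $2c$. This is exactly where the even case must deviate from Section~\ref{firstpart}, in which pairwise coprimality allowed Mordell's theorem and a single reciprocity step (Lemma~\ref{hardybothodd}) to carry the whole computation, and it is responsible for the deviations from a purely formal application of Mordell's formula.
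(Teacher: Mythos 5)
Your slicing strategy is viable and genuinely different from the paper's proof. The paper re-runs Mordell's three-dimensional argument: it starts from $\#(D(d,c,2)\cap\Z^3)=\tfrac12\sum'(\floor{E}-1)(\floor{E}-2)$ with $E=\tfrac{x}{d}+\tfrac{y}{c}+\tfrac{z}{2}$ summed over the box $0\le x<d$, $0\le y<c$, $0\le z<2$ minus the origin, inserts a correction $-\tfrac38$ for the unique box point with $E=1$, and splits the rest into three sums evaluated by residue-system arguments. You instead flatten the problem to the slices $z=0,1$; your $A=\tfrac12(c+1)(d+1)-2$ and $B=\tfrac{d(c+1)}{4}-\sum_{y=1}^{(c-1)/2}\floor{\frac{dy}{c}}$ are both correct. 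The genuine gap is that the entire content of the proposition sits in the one step you leave undone: the closed-form evaluation of $\Sigma=\sum_{y=1}^{(c-1)/2}\floor{\frac{dy}{c}}$. Your stated plan for it --- pass to modulus $2c$ and apply reciprocity to sums such as $s(d,2c)$, $s(2c,d)$ --- stalls on exactly the obstruction you yourself acknowledge: those pairs have $\gcd=2$, so the reciprocity you want is unavailable, and you offer no way around this. For the record, the sum can be evaluated without ever touching a non-coprime pair: for $1\le y\le c-1$ the indicator of $y\le\frac{c-1}{2}$ equals $\frac12+\llrrparen{\frac{2y}{c}}-2\llrrparen{\frac{y}{c}}$, so writing $\floor{t}=t-\llrrparen{t}-\frac12$ and substituting $y\mapsto\overline{2}\,y$ (where $2\overline{2}\equiv1\pmod{c}$) in the sum $\sum_{y=1}^{c-1}\llrrparen{\frac{2y}{c}}\llrrparen{\frac{dy}{c}}$ gives
\[
\sum_{y=1}^{(c-1)/2}\llrrparen{\tfrac{dy}{c}}=s(d\,\overline{2},c)-2s(d,c)=s(d/2,c)-2s(d,c),
\]
the last equality because $d$ even forces $d\,\overline{2}\equiv d/2\pmod{c}$. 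Ordinary reciprocity for the \emph{coprime} pair $(d/2,c)$, together with the homogeneity $s(2c,d)=s(c,d/2)$, then produces exactly the Dedekind part $2s(d,c)+s(2c,d)$ and the rational terms. Without this (or an equivalent) identity, your write-up is a plan, not a proof.

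Two factual corrections. First, your claim that ``$cd$ is odd, so the hypotenuse $2cx+2dy=cd$ contains no lattice points'' is false: here $d$ is even, so $cd$ is even and the hypotenuse carries exactly one lattice point, $(x,y)=(d/2,0)$; this is precisely the non-coprime phenomenon the paper's proof isolates (its unique box point with $E=1$, the source of the $-\tfrac38$ correction). Your formula for $B$ survives only because the strict inequality excludes that point, not for the reason you give. Second, if you do carry your computation to the end, you will obtain the coefficient $\frac{1}{6cd}$ where the proposition prints $\frac{1}{24cd}$ --- and your method will be the correct one. For $(d,c)=(2,3)$ the tetrahedron contains $6$ lattice points, while the printed formula gives $6-\frac{1}{48}$; in general the defect is $\frac{1}{8cd}$. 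The source is in the paper's evaluation of $C=\tfrac12\sum'\llrrparen{E}^2$, whose key claim --- that $2cx+2dy+cdz$ runs through a complete set of residues modulo $2cd$ --- fails for even $d$: all of these integers are even, and each even residue is attained twice. So a completed version of your argument would establish the corrected statement rather than the printed one, which is all the more reason to carry out the delicate step in full instead of calibrating intermediate identities against the expected answer.
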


\begin{proof}
The proof is along the lines of Mordell's proof \cite{mordell}. We only indicate where adjustments need to be made. 

One starts with the formula $$\# (D(d, c, 2) \cap \Z^3) = \frac{1}{2} \sum_{x, y, z}{\vphantom{\sum}}' \left( \floor{\frac{x}{d} + \frac{y}{c} + \frac{z}{2}} - 1 \right)  \left( \floor{\frac{x}{d} + \frac{y}{c} + \frac{z}{2}} - 2 \right),$$
where the sum goes over $0 \leq x < d$, $0 \leq y < c$, and $0 \leq z < 2$ and $\sum'$ means that we omit $x = y = z = 0$. 

There exists exactly one lattice point $(x, y, z)$ with $\frac{x}{d} + \frac{y}{c} + \frac{z}{2} = 1$, namely $x = \frac{d}{2}$ and $z = 1$ (this would not be the case if $c$, $d$, $2$ were pairwise coprime). There exist no lattice points with $(x, y, z)$ with $\frac{x}{d} + \frac{y}{c} + \frac{z}{2} = 2$.

Now rewrite $$\# (D(d, c, 2) \cap \Z^3) =  \frac{1}{2} \sum_{x, y, z}{\vphantom{\sum}}' (E - ((E)) - 3 / 2 ) (E - ((E)) - 5 / 2) - \frac{3}{8},$$
where $E = \frac{x}{d} + \frac{y}{c} + \frac{z}{2}$ and the term $- \frac{3}{8}$ is the correction factor for the unique case where $E = 1$. 

We split up the sum $\sum_{x, y, z}' (E - ((E)) - 3 / 2) (E - ((E)) - 5 / 2) = A + B + C$ with $$A =  \frac{1}{2} \sum_{x, y, z}{\vphantom{\sum}}' (E - 3 / 2) (E - 5 / 2), \; \; B = -  \sum_{x, y, z}{\vphantom{\sum}}' (E - 2) \; ((E)), \; \; C =   \frac{1}{2} \sum_{x, y, z}{\vphantom{\sum}}' ((E))^2.$$
It is straightforward to calculate $A$, which is equal to $\frac{13 c d}{12} + \frac{c}{3d} + \frac{d}{3c} + \frac{3c}{2}+ \frac{3d}{2} - \frac{11}{4}$. For $B$, we obtain \begin{align*}
    B &= \sum_{x, y, z}{\vphantom{\sum}}' \frac{x}{d} \llrrparen{\frac{x}{d} + \frac{y}{c} + \frac{z}{2}} + \sum_{x, y, z}{\vphantom{\sum}}' \frac{y}{c} \llrrparen{\frac{x}{d} + \frac{y}{c} + \frac{z}{2}} +  \sum_{x, y, z}{\vphantom{\sum}}' \frac{z}{2} \llrrparen{\frac{x}{d} + \frac{y}{c} + \frac{z}{2}} \\
    &= \sum_{x, y}{\vphantom{\sum}}' \frac{x}{d} \llrrparen{\frac{2x}{d} + \frac{2y}{c}} + \sum_{x, y}{\vphantom{\sum}}' \frac{y}{c} \llrrparen{\frac{2x}{d} + \frac{2y}{c}} + \frac{1}{2} \sum_{y = 1}^{c - 1} \llrrparen{\frac{2y}{c}}\\
    &= \sum_{x = 1}^{d - 1} \frac{x}{d} \llrrparen{\frac{2cx}{d}} + 2 \sum_{y = 1}^{c - 1} \frac{y}{c} \llrrparen{\frac{dy}{c}} \\
    &= s(2c, d) + 2 s(d, c),
\end{align*}
where in the second last line we used that $2y$ runs through a complete set of residues modulo $c$ -- thus also $\sum_{y = 1}^{c - 1} \llrrparen{\frac{2y}{c}} = 0$ --, and that $2x$ runs through a complete set of residues of $d / 2$ twice.

To calculate $C$, we may write $$C =  \frac{1}{2} \sum_{x, y, z}{\vphantom{\sum}}' \llrrparen{\frac{x}{d} + \frac{y}{c} + \frac{z}{2}}^2 = \sum_{x, y, z}{\vphantom{\sum}}' \llrrparen{\frac{x}{2cd}}^2 - \frac{1}{4},$$
as $2cx + 2dy + cdz$ runs through a complete set of residues modulo $2cd$ with the exception that $2cx + 2dy + cdz \equiv 0 \pmod{2cd}$ is attained twice. It is then easy to calculate $C = \frac{cd}{12} + \frac{1}{24 cd}$.
\end{proof}

Similarly as in Lemma \ref{hardybothodd}, the linear combination $s(2c, d) + 2\;s(d, c)$ in Proposition \ref{mordell2} can be rewritten in terms of Hardy sums.

\begin{lem} \label{hardyoneeven}
Let $d \geq 1$ be an even integer and $c \geq 1$ such that $(d, c) = 1$. We have $$2 s(d, c) + s (2c, d) = \frac{d^2 + c^2 + 1}{6cd} - \frac{1}{2} - \frac{1}{4}S_4(c, d / 2).$$
\end{lem}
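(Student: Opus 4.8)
The plan is to reduce the identity, via formula (\ref{exprhardydede}) and the reciprocity law, to a single clean statement about Dedekind sums sharing a common factor, and then to prove that statement directly from the definition.

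First, since $d$ is even and $(d,c)=1$, the integer $c$ is odd and $(c,d/2)=1$, so formula (\ref{exprhardydede}) applies with arguments $c$ (odd) and $d/2$, giving
$$S_4(c, d/2) = 8\,s(c, d) - 4\,s(c, d/2).$$
Substituting this into the right-hand side of the claimed identity and invoking the reciprocity law $s(d,c) + s(c,d) = \frac{1}{12}\frac{c^2 + d^2 + 1}{cd} - \frac{1}{4}$ (legitimate since $(c,d)=1$), a direct rearrangement cancels the rational main terms and shows that the whole lemma is equivalent to the single identity
$$s(2c, d) = s(c, d/2).$$
This is the heart of the matter. Here $\gcd(2c, d) = 2$, so $s(2c,d)$ is a Dedekind sum with \emph{non-coprime} arguments; in particular this is not an instance of reciprocity, and one must argue from the defining sum involving the sawtooth $\llrrparen{\cdot}$.

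To prove $s(2c,d) = s(c, d/2)$, write $d = 2e$ with $e = d/2$. In $s(2c, 2e) = \sum_{j=1}^{2e-1} \llrrparen{\frac{j}{2e}}\llrrparen{\frac{cj}{e}}$, the factor $\llrrparen{\frac{cj}{e}}$ is periodic in $j$ with period $e$. I would split the range $1 \le j \le 2e-1$ according to $j \in \{1, \dots, e-1\}$, $j = e$, and $j = e+r$ with $r \in \{1, \dots, e-1\}$. The middle term vanishes because $\llrrparen{\frac{e}{2e}} = \llrrparen{\frac12} = 0$, and the two remaining sums combine to
$$s(2c, 2e) = \sum_{r=1}^{e-1} \left[ \llrrparen{\frac{r}{2e}} + \llrrparen{\frac{e+r}{2e}} \right] \llrrparen{\frac{cr}{e}}.$$
The bracket is handled by the distribution relation $\llrrparen{x} + \llrrparen{x + \frac12} = \llrrparen{2x}$, applied with $x = \frac{r}{2e}$, which turns it into $\llrrparen{\frac{r}{e}}$; hence the sum collapses to $\sum_{r=1}^{e-1}\llrrparen{\frac{r}{e}}\llrrparen{\frac{cr}{e}} = s(c, e) = s(c, d/2)$.

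The main obstacle is recognizing that the entire lemma collapses to $s(2c,d) = s(c,d/2)$, together with the conceptual point that this equality lies outside the scope of reciprocity because the two arguments share the factor $2$. Once this is isolated, the distribution relation for the sawtooth does the remaining work cleanly; the only routine verifications are that the sawtooth arguments $\frac{r}{2e}$, $\frac{e+r}{2e}$, and $\frac{r}{e}$ never meet an integer or half-integer for $1 \le r \le e-1$, so that no boundary conventions of $\llrrparen{\cdot}$ intervene.
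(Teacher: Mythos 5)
Your proof is correct and follows essentially the same route as the paper: apply formula (\ref{exprhardydede}) with arguments $(c, d/2)$, use reciprocity on $s(d,c)$, and reduce the whole lemma to the identity $s(2c,d) = s(c,d/2)$. The only difference is that the paper simply asserts $s(2c,2d') = s(c,d')$ without proof, whereas you supply a correct derivation of it from the definition via the distribution relation $\llrrparen{x} + \llrrparen{x+\tfrac12} = \llrrparen{2x}$ --- a welcome but minor addition.
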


\begin{proof}
We use formula (\ref{exprhardydede}) for the Hardy sums $S_4(d, c)$. Applying the reciprocity formula for Dedekind sums, we calculate
\begin{align*}
   2 s(d, c) + s (2c, d) &= 2 \left( \frac{1}{12} \frac{d^2 + c^2 + 1}{cd} - \frac{1}{4} - s(c, d) \right) + s(c, d / 2) \\
   &= \frac{d^2 + c^2 + 1}{6cd} - \frac{1}{2} - \frac{1}{4}S_4(c, d / 2),
\end{align*}
where we used that $s(2c, 2d') = s(c, d')$ for all $d' \in \Z$. 
\end{proof}

Proposition \ref{mordell2} and Lemma \ref{hardyoneeven} prove the second part of Theorem \ref{mainthm}.

\section{Hardy Sums as Intersection Numbers and the Rademacher Cocycle} \label{geomconstsec}

As mentioned in the introduction, the Dedekind sums $s(a, c)$ in (\ref{sdefi}) are uniquely characterized by the following three properties for $(a, c) = 1$: \begin{enumerate}
    \item $s(0, 1) = 0$, 
    \item $s(a + c, c) = s(a, c)$,
    \item $s(a, c) - s(- c, a) = \frac{1}{12} \frac{a^2 + c^2 + 1}{ac} - \frac{1}{4}$ for $a, c \neq 0$.
\end{enumerate}

Since for each coprime pair of integers $a, c$ there exists a a matrix $A = \SmallMatrix{a}{*}{c}{*} \in \SL$, we may write $s(A)$ for $s(a, c)$, i.e. view the Dedekind sums as a function $s : \SL \to \Q$. By definition, the Dedekind sum $s(A)$ is invariant under right-multiplication by $T = \SmallMatrix{1}{1}{0}{1}$; moreover, we set $s(1, 0) = s(I) = 0$. The three defining properties above then correspond to the matrix operations under the generators $T$ and $S = \SmallMatrix{0}{-1}{1}{0}$ of $\SL$, namely for $A = \SmallMatrix{a}{*}{c}{*} \in \SL$ with $a, c \neq 0$: \begin{enumerate}
    \item $s(S) = s(I) = 0$, 
    \item $s(T^n A) = s(A)$ for all $n \in \Z$, 
    \item $s(A) - s(SA) = \frac{1}{12} \frac{a^2 + c^2 + 1}{cd} - \frac{1}{4}$.
\end{enumerate}
Each matrix in $\SL$ can be written as a word in $T$, $S$ and therefore we get that the three properties uniquely define the function $s : \SL \to \Q$. 

Similarly, we may view the Hardy sums $S(a, c)$ and $S_4(a, c)$ as functions on their respective groups $\G_\theta = \langle T^2, S \rangle$ and $\G^0(2) = \langle T^2, V \rangle$ with $V = \SmallMatrix{1}{0}{1}{1}$. To avoid confusion with the matrix $S$, we denote these functions in gothic print, i.e. $\Sf : \G_\theta \to \Z$ and $\Sf_4 : \G^0(2) \to \Z$. Put $\Sf(A) = S(a, c)$ for $A = \SmallMatrix{a}{*}{c}{*} \in \G_\theta$.\footnote{To make our notations simpler, we consider $S(a, c)$ instead of the usual $S(d, c)$. The change is insubstantial, as $S(d, c) = - \Sf(A^{-1}) = \Sf(A)$ for $A = \SmallMatrix{a}{b}{c}{d} \in \G_\theta$; see Proposition \ref{invprop}.} The function $\Sf : \G_\theta \to \Z$ is uniquely determined by the properties: \begin{enumerate}
    \item $\Sf(S) = \Sf(T) = \Sf(I) = 0$, 
    \item $\Sf(T^{2n}A) = \Sf(A)$ for all $n \in \Z$,
    \item $\Sf(A) - \Sf(SA) = \sgn(ac)$ for $A = \SmallMatrix{a}{*}{c}{*} \in \G_\theta$; 
\end{enumerate}

and the function $\Sf_4 : \G^0(2) \to \Z$ (similarly defined) is uniquely determined by the properties: \begin{enumerate}
    \item $\Sf_4(V) = \Sf_4(T) = \Sf_4(I) = 0$, 
    \item $\Sf_4(AT^{2n}) = \Sf_4(A)$ for all $n \in \Z$, 
    \item $\Sf_4(A) + \Sf_4(AV) = - \sgn(c(a + c))$ for $A = \SmallMatrix{a}{*}{c}{*} \in \G^0(2)$. 
\end{enumerate} 

The uniqueness of this characterization will be seen in the proof of Theorem \ref{geomchar}. Our goal in this section is to find a geometric characterization of the functions $\Sf : \G_\theta \to \Z$ and $\Sf_4 : \G^0(2) \to \Z$, which satisfies these three corresponding properties and prove a generalized reciprocity law for $\Sf$ and $\Sf_4$.

\subsection{Net of Oriented Geodesics} \label{netsec}
Consider the set of rationals $\frac{a}{c}$ with $(a, c) = 1$, $c > 1$, and $a, c$ both odd and the net of geodesics connecting two such rationals $\frac{a}{c}$ and $\frac{b}{d}$ if $ad - bc = \pm 2$. Note that these geodesics are the image of the geodesic connecting $-1$ and $1$ under $\G_\theta$. 

Now orient the geodesic connecting $\frac{a}{c}$ and $\frac{b}{d}$ such that the geodesic goes from $\frac{a}{c}$ to $\frac{b}{d}$ if $c < d$ and from $\frac{b}{d}$ to $\frac{a}{c}$ if $d < c$. In the case $c = d$, which can only occur if the geodesic connects two integers, we give it two opposing orientations. We will denote the resulting net of oriented geodesics by $\mathcal{N}_\theta$.

\begin{center}
\begin{tikzpicture}
\clip (-6, 0) rectangle (6, 6);
\draw (-6, 0) -- (6, 0);
\draw (0, 0) -- (0, 10);
\draw[decoration={markings, mark=at position 0.2 with {\arrow{>}}, mark=at position 0.3 with {\arrow{<}}},postaction={decorate}] (0, 0) circle(5.25);
\draw[decoration={markings, mark=at position 0.25 with {\arrow{>}}},postaction={decorate}] (3.5, 0) circle(1.75);
\draw[decoration={markings, mark=at position 0.25 with {\arrow{<}}},postaction={decorate}] (-3.5, 0) circle(1.75);
\draw[decoration={markings, mark=at position 0.25 with {\arrow{>}}},postaction={decorate}] (1.4, 0)  circle(0.35);
\draw[decoration={markings, mark=at position 0.25 with {\arrow{<}}},postaction={decorate}] (- 1.4, 0)  circle(0.35);
\draw[decoration={markings, mark=at position 0.25 with {\arrow{>}}},postaction={decorate}] (4.2, 0) circle(1.05);
\draw[decoration={markings, mark=at position 0.25 with {\arrow{<}}},postaction={decorate}] (- 4.2, 0) circle(1.05);
\draw[decoration={markings, mark=at position 0.25 with {\arrow{>}}},postaction={decorate}] (0.9, 0) circle(0.15);
\draw[decoration={markings, mark=at position 0.25 with {\arrow{<}}},postaction={decorate}] (-0.9, 0) circle(0.15);
\draw[decoration={markings, mark=at position 0.25 with {\arrow{<}}},postaction={decorate}] (2, 0) circle(0.25);
\draw[decoration={markings, mark=at position 0.25 with {\arrow{>}}},postaction={decorate}] (-2, 0) circle(0.25);
\draw[decoration={markings, mark=at position 0.25 with {\arrow{>}}},postaction={decorate}] (4.5, 0) circle(0.75);
\draw[decoration={markings, mark=at position 0.25 with {\arrow{<}}},postaction={decorate}] (-4.5, 0) circle(0.75);
\draw[decoration={markings, mark=at position 0.43 with {\arrow{>}}, mark=at position 0.45 with {\arrow{<}}},postaction={decorate}] (10.5, 0) circle(5.25);
\draw[decoration={markings, mark=at position 0.4 with {\arrow{<}}},postaction={decorate}] (7, 0) circle(1.75);
\draw[decoration={markings, mark=at position 0.35 with {\arrow{<}}},postaction={decorate}] (9.1, 0)  circle(0.35);
\draw[decoration={markings, mark=at position 0.35 with {\arrow{<}}},postaction={decorate}] (6.3, 0) circle(1.05);
\draw[decoration={markings, mark=at position 0.35 with {\arrow{<}}},postaction={decorate}] (8.5, 0) circle(0.25);
\draw[decoration={markings, mark=at position 0.3 with {\arrow{<}}},postaction={decorate}] (6, 0) circle(0.75);
\draw[decoration={markings, mark=at position 0.07 with {\arrow{<}}, mark=at position 0.05 with {\arrow{>}}},postaction={decorate}] (-10.5, 0) circle(5.25);
\draw[decoration={markings, mark=at position 0.1 with {\arrow{>}}},postaction={decorate}] (-7, 0) circle(1.75);
\draw[decoration={markings, mark=at position 0.15 with {\arrow{>}}},postaction={decorate}] (-9.1, 0)  circle(0.35);
\draw[decoration={markings, mark=at position 0.15 with {\arrow{>}}},postaction={decorate}] (-6.3, 0) circle(1.05);
\draw[decoration={markings, mark=at position 0.15 with {\arrow{>}}},postaction={decorate}] (-8.5, 0) circle(0.25);
\draw[decoration={markings, mark=at position 0.2 with {\arrow{>}}},postaction={decorate}] (-6, 0) circle(0.75);
\end{tikzpicture}

\smallskip

Figure 1: The net $\mathcal{N}_\theta$ for the rationals $\frac{a}{c}$, $(a, c) = 1$, $a, c$ both odd with $\vert c \vert \leq 7$.
\end{center}

\begin{lem} \label{invorient}
Let $a, b, c, d$ be odd integers with $c, d \geq 1$ and $(a, c) = 1$, $(b, d) = 1$ such that $ad - bc = \pm 2$. We have $c = d$ if and only if $c = d = 1$. We have $a = b$ if and only if $a, b = \pm 1$. If $a \neq b$, the condition $c < d$ is satisfied if and only if $\vert a \vert < \vert b \vert$. 
\end{lem}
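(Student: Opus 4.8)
The plan is to dispatch the three assertions separately: the first two rest on a one-line divisibility observation, and the last on a sign analysis feeding into a short size estimate.

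For the statement about $c = d$, I would note that if $c = d$ then $ad - bc = c(a - b) = \pm 2$; since $a - b$ is a difference of odd integers it is even, say $a - b = 2m$, so $cm = \pm 1$, and as $c \geq 1$ this forces $c = 1$, i.e. $c = d = 1$. The converse is trivial. The statement about $a = b$ is identical with the roles interchanged: $a = b$ gives $a(d - c) = \pm 2$ with $d - c$ even, so $a \mid 2$; being odd, $a = \pm 1$, hence $a = b = \pm 1$.

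For the last assertion I would first reduce to a single implication. Since $a, b$ are odd they are nonzero, and interchanging the pairs $(a, c)$ and $(b, d)$ negates $ad - bc$ (leaving it $\pm 2$) while swapping $c < d \leftrightarrow d < c$ and $|a| < |b| \leftrightarrow |b| < |a|$. It therefore suffices to prove, for $c \neq d$, that $c < d$ implies $|a| < |b|$; applying this to the swapped pair yields $c > d \Rightarrow |a| > |b|$, and as $c \neq d$ leaves only these two cases the equivalence follows. By the first assertion $c = d$ forces $c = d = 1$, which is exactly the case of geodesics joining two integers; there the equivalence is not asserted, consistent with those geodesics carrying both orientations.

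The crux is to split $|ad - bc| = 2$ according to the signs of $a$ and $b$. If $a, b$ have opposite signs then $ad$ and $bc$ do too, so $|ad - bc| = |a|\,d + |b|\,c \geq c + d$, forcing $c + d \leq 2$ and hence $c = d = 1$; thus for $c \neq d$ the numerators share a sign and $|ad - bc| = \bigl| |a|\,d - |b|\,c \bigr| = 2$. Now assume $c < d$, so $d - c \geq 2$, and suppose for contradiction that $|a| \geq |b|$. Then $|a|\,d \geq |b|\,d > |b|\,c$, so
\[
2 = |a|\,d - |b|\,c \geq |b|\,d - |b|\,c = |b|\,(d - c) \geq 2,
\]
whence every inequality is an equality, giving $|a| = |b| = 1$ and $d - c = 2$. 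But $|a| = |b|$ together with $a, b$ of equal sign means $a = b$, contradicting $a \neq b$; therefore $|a| < |b|$. The one delicate point is isolating the degenerate configuration $c = d = 1$, where opposite signs genuinely occur and the stated equivalence collapses into the two-fold orientation of the integer-joining geodesics; away from it the argument is a routine estimate.
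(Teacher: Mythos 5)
Your proof is correct, and its core mechanism is the same as the paper's: observe that $a$ and $b$ must share a sign away from the degenerate case, then compare the sizes of $ad$, $bc$, $bd$. The organization, however, differs in ways worth recording. The paper normalizes to $a, b > 0$ and runs a case analysis on the sign of $ad - bc$ crossed with $c < d$ versus $d < c$ (using the parity of $b - a$ in one case and deferring the sign $ad - bc = 2$ to ``similar''); you instead halve the work with the swap symmetry $(a, c) \leftrightarrow (b, d)$, which negates $ad - bc$, and close the remaining implication with a single equality-forcing chain $2 = |a|d - |b|c \geq |b|(d - c) \geq 2$, so you never need to split on the sign of $ad - bc$ at all. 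Your write-up is also more careful at the boundary: you justify the same-sign claim by the explicit estimate $|ad - bc| \geq c + d$, which shows it holds exactly when $(c, d) \neq (1, 1)$, whereas the paper's proof assumes $c, d > 1$ and so, read literally, skips the case where exactly one of $c, d$ equals $1$ (e.g. $c = 1 < d$), which your argument does cover. Finally, note that both you and the paper effectively restrict the third assertion to $c \neq d$, and rightly so: as stated, with hypothesis only ``$a \neq b$'', the equivalence genuinely fails at $c = d = 1$ (take $a = 1$, $b = 3$, $c = d = 1$: then $a \neq b$ and $|a| < |b|$ but not $c < d$), so the hypothesis should really be $c \neq d$; this is harmless for the application, since the $c = d = 1$ geodesics are exactly the doubly-oriented ones connecting integers, for which no orientation rule is needed. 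Your explicit flagging of this degenerate configuration is a point where your treatment is more precise than the paper's.
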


\begin{proof}
If $a = b$, then $ad - bc = a (d - c) = \pm 2$, and since $a$ is odd, we have $a = \pm 1$. Analogously, it is showed that $c = d$ implies $c = d = 1$.

Let us thus suppose that $c, d > 1$. Then the integers $a, b$ need to have the same sign to satisfy the equation $ad - bc = \pm 2$. By multiplying the equation $ad - bc = \pm 2$ on both sides with $- 1$ if necessary, we may assume w.l.o.g. that $a, b > 0$. 

Suppose that $ad - bc = -2$ and that $a \neq b$. If $c < d$, then $ad = bc - 2$ implies that $ad < bc < bd$, hence $a < b$. If $d < c$, then $2 = bc - ad$ implies that $d(b - a) < 2$ and moreover that $b - a < 0$, since $b - a$ is an even integer and we excluded $a = b$. The case $ad - bc = 2$ is similar.
\end{proof}

Lemma \ref{invorient} has the following consequence for the oriented net of geodesics $\mathcal{N}_\theta$.

\begin{lem} \label{orientationlem}
The oriented net $\mathcal{N}_\theta$ is invariant under the maps $t_1(z) = - \overline{z}$ and $t_2(z) = z + 2$. Moreover, under $t_3(z) = \frac{z}{\vert z \vert^2}$ any oriented geodesic $g \in \mathcal{N}_\theta$ whose endpoints are not both of the form $\frac{1}{n}$ for $n \in \Z - \{0, \pm 1\}$ or both integers not equal to $\pm 1$ is mapped to an oriented geodesic in $\mathcal{N}_\theta$. 
\end{lem}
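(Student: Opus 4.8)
The plan is to check, for each of the three maps $t_1,t_2,t_3$, the three pieces of structure that define $\mathcal{N}_\theta$: that the map permutes the vertex set of rationals $\frac ac$ with $a,c$ odd and coprime (including the integers, $c=1$), that it preserves the adjacency relation $ad-bc=\pm2$ defining the edges, and that it respects the orientation convention (from the smaller denominator to the larger, with the two opposing orientations when the denominators coincide). Since all three are boundary-extendable isometries of $\uh$, they carry geodesics to geodesics, so it suffices to track their action on the endpoints in $\R\cup\{\infty\}$.

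For $t_2(z)=z+2$ this is immediate: on the boundary it sends $\frac ac\mapsto\frac{a+2c}{c}$, which is again odd over odd and coprime; the determinant $(a+2c)d-(b+2d)c=ad-bc$ is unchanged, so edges go to edges; and since the denominators $c,d$ are untouched the orientation rule survives verbatim. The reflection $t_1(z)=-\overline z$ acts by $\frac ac\mapsto\frac{-a}{c}$, which again leaves the denominators (hence the orientations, and the doubly-oriented integer geodesics) intact, while the determinant merely changes sign, $(-a)d-(-b)c=-(ad-bc)=\mp2$, so adjacency is preserved. Thus $\mathcal{N}_\theta$ is invariant under $t_1$ and $t_2$ by a direct computation.

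The map $t_3(z)=z/|z|^2=1/\overline z$ is the delicate one, and here Lemma \ref{invorient} carries the argument. On the boundary $t_3$ sends $\frac ac\mapsto\frac ca$ (normalised to a positive denominator), i.e.\ it interchanges numerator and denominator. The vertex set and the adjacency relation are preserved precisely because the defining conditions are symmetric in $a$ and $c$: oddness and coprimality are unaffected by the swap, and the image pair has determinant $cb-da=-(ad-bc)=\mp2$. The orientation, however, is now governed by the \emph{new} denominators $|a|,|b|$ rather than by $c,d$, and this is exactly the equivalence furnished by Lemma \ref{invorient}: for $a\neq b$ one has $c<d$ if and only if $|a|<|b|$. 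Hence in the generic (singly-oriented) situation the oriented geodesic from $\frac ac$ to $\frac bd$ is sent to the oriented geodesic from $\frac ca$ to $\frac db$ that $\mathcal{N}_\theta$ prescribes, and orientation is preserved.

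I expect the main obstacle to be the orientation bookkeeping for $t_3$ in the degenerate branches of Lemma \ref{invorient}, which is also what forces the hypotheses on the excluded geodesics. When $c=d$ (so $c=d=1$, both endpoints integers) the source is doubly oriented, whereas $t_3$ carries its endpoints to two unit fractions $\frac1a,\frac1b$ spanning a singly-oriented geodesic; symmetrically, when $a=b=\pm1$ (both endpoints of the form $\frac1n$) a singly-oriented source is sent to a geodesic between two integers, which is doubly oriented. In either case the orientation multiplicity flips, so $t_3$ cannot match the oriented net there; these are the families the statement removes, namely geodesics with both endpoints of the form $\frac1n$ for $n\in\Z-\{0,\pm1\}$ and geodesics between two integers other than $\pm1$, with the geodesic from $-1$ to $1$ (setwise fixed by $t_3$) as the harmless boundary case. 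The careful part is therefore to reduce the analysis cleanly to the trichotomy of Lemma \ref{invorient} and to verify that, away from the excluded families, one always lands in the generic branch $a\neq b$, $c\neq d$ where the equivalence $c<d\Leftrightarrow|a|<|b|$ applies.
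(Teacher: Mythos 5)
Your proposal is correct and takes essentially the same route as the paper: invariance under $t_1$ and $t_2$ by direct computation on the endpoints, and for $t_3$ the swap of numerators and denominators combined with Lemma \ref{invorient} to see that the orientation convention (smaller denominator to larger) is preserved in the generic case. Your explicit discussion of the degenerate doubly-oriented cases is somewhat more detailed than the paper's proof, which treats only the case $a, b > 1$ and defers the exceptional geodesics to Remark \ref{whathappens}, but the substance is identical.
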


\begin{proof}
Suppose we have a geodesic from $\frac{a}{c}$ to $\frac{b}{d}$ in $\mathcal{N}_\theta$ with $c, d > 1$ odd integers, $d > c > 1$, $(a, c) = 1$ and $(b, d) = 1$ with $ad - bc = \pm 2$. We will only show the case where $a, b > 1$. 

Under the orientation-reversing map $t_1$ the geodesic is mapped to the geodesic from $- \frac{a}{c}$ to $- \frac{b}{d}$, which still lies in $\mathcal{N}_\theta$. 

Under the orientation-preserving $t_2$ the geodesic is mapped to the one from $\frac{a + 2c}{c}$ to $\frac{b + 2c}{d}$, which also still lies in $\mathcal{N}_\theta$. 

Finally, the orientation-reversing map $t_3$ sends the geodesic to the geodesic from $\frac{c}{a}$ to $\frac{d}{b}$, which as a non-oriented geodesic still lies in $\mathcal{N}_\theta$. By Lemma \ref{invorient} the orientation is preserved in $\mathcal{N}_\theta$ under $t_3$, as $a < b$.
\end{proof}

\begin{remark} \label{whathappens}
If $g \in \mathcal{N}_\theta$ in Lemma \ref{orientationlem} connects $\frac{1}{n}$ and $\frac{1}{n + 2}$ for some $n \in \Z$, then $t_3(g)$ is a geodesic connecting $n$ and $n + 2$, i.e. two integers. But geodesics connecting two integers in $\mathcal{N}_\theta$ are oriented with two opposing orientations -- by Lemma \ref{invorient}, those are, in fact, the only geodesics in $\mathcal{N}_\theta$ which have opposing orientations. Hence, the oriented geodesic $t_3(g)$ does not have two opposing orientations and does not lie in $\mathcal{N}_\theta$.
\end{remark}

\begin{remark} \label{orientationA}
More generally, as $t_2(z) = T^2.z$ and $t_1(t_3(z)) = S.z$ and $\G_\theta = \langle T^2, S \rangle$, the proof of Lemma \ref{orientationlem} shows that the map arising from each matrix $A \in \G_\theta$ preserves geodesics in $\mathcal{N}_\theta$ with the exception of the doubly-oriented geodesics connecting two integers and their preimages under $A$.
\end{remark}

\subsection{Intersection Numbers} 

Let $g, h$ be any two oriented geodesics in $\uh$. Define $\varphi_g(h)$ as follows: $$\varphi_g(h) = \begin{cases} 0, &\mbox{if } g \cap h = \emptyset, \\ + 1, &\mbox{if } g \; \mathrm{intersects \; } h \mathrm{\; on \; the \; right}, \\ - 1, &\mbox{if } g \; \mathrm{intersects \; } h \mathrm{\; on \; the \; left}. \end{cases}$$
By "$g$ intersects $h$ from the right" we mean that if we follow the path of $h$, then the orientation of $g$ at the intersection point is directed to the right. If the geodesic $g$ is doubly-oriented (as we allowed in section \ref{netsec}), then we set $\varphi_g(h) = 1 - 1 = 0$.

We will define the Hardy sums as a signed intersection number with the net $\mathcal{N}_\theta$, which we defined in section \ref{netsec}. For a geodesic $h$, define the signed intersection number \begin{equation} \label{signedintdefi}
    I_\theta(h) = \sum_{g \in \mathcal{N}_\theta} \varphi_g(h),
\end{equation}
whenever the sum exists.

We will denote an oriented geodesic $h$ from two points $z_1, z_2 \in \uh \cup \Q \cup \{i \infty \}$ by $h(z_1, z_2)$. To simplify notation, we will write $h_x = h(i \infty, x)$. 

Let $\Q_\theta = \G_\theta.i\infty$ be the set of rationals $\frac{a}{c}$ with $(a, c) = 1$ and such that $a + c$ is odd. For each $x \in \Q_\theta$, let $I_\theta(x) = I_\theta(h_x)$ be the signed intersection number of $h_x$ with the net $\mathcal{N}_\theta$. We first need to show that the function $I_\theta(x)$ is well-defined. 

\begin{lem} \label{finiteintlem}
For $x \in \Q_\theta$, the geodesic $h_x$ only intersects finitely many geodesics in $\mathcal{N}_\theta$. 
\end{lem}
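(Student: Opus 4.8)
The plan is to first pin down exactly which geodesics of $\mathcal{N}_\theta$ can possibly meet $h_x$. Every $g \in \mathcal{N}_\theta$ is a semicircle with two \emph{finite} rational endpoints $\frac{a}{c}$ and $\frac{b}{d}$ (both with odd numerator and denominator, and with $ad - bc = \pm 2$), whereas $h_x$ is the vertical ray $\{x + it : t > 0\}$. Such a semicircle meets $h_x$ inside $\uh$ precisely when $x$ lies strictly between $\frac{a}{c}$ and $\frac{b}{d}$ on the real line. I would also record that no boundary coincidence can occur: since $x = \frac{p}{q} \in \Q_\theta$ has $p + q$ odd while every endpoint of a net geodesic has \emph{both} numerator and denominator odd, $x$ can never equal an endpoint. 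Thus the lemma reduces to showing that only finitely many $g \in \mathcal{N}_\theta$ have $x$ strictly between their endpoints.

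The heart of the argument is a bound on the denominators $c, d$. Write $x = \frac{p}{q}$ in lowest terms, and suppose $g \in \mathcal{N}_\theta$ joins $\frac{a}{c}$ to $\frac{b}{d}$ with, after relabelling, $\frac{a}{c} < x < \frac{b}{d}$; then $\frac{b}{d} - \frac{a}{c} = \frac{|ad - bc|}{cd} = \frac{2}{cd}$. Since $\frac{a}{c} \neq x$ and both are in lowest terms, $x - \frac{a}{c} = \frac{pc - aq}{qc} \geq \frac{1}{qc}$ as $pc - aq$ is a positive integer; likewise $\frac{b}{d} - x \geq \frac{1}{qd}$. Adding these two estimates and using that the gaps sum to $\frac{2}{cd}$ gives $\frac{1}{qc} + \frac{1}{qd} \leq \frac{2}{cd}$, which after clearing denominators is exactly $c + d \leq 2q$. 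This key inequality is the step I expect to be the main obstacle; once it is in place everything else is elementary counting.

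With $c + d \leq 2q$ established, both denominators are bounded by $2q$. For each admissible $c$ the left endpoint satisfies $0 < x - \frac{a}{c} < \frac{2}{cd} \leq \frac{2}{c}$, so $a$ lies within distance $2$ of $cx$, leaving only finitely many choices; the same holds for $b$ given $d$. Hence there are finitely many admissible endpoint pairs $\left(\frac{a}{c}, \frac{b}{d}\right)$, and therefore finitely many geodesics $g \in \mathcal{N}_\theta$ meeting $h_x$. Consequently the sum defining $I_\theta(h_x)$ in (\ref{signedintdefi}) has only finitely many nonzero terms and the function $I_\theta(x)$ is well-defined, which proves the lemma.
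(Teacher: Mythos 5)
Your proof is correct, but it takes a genuinely different route from the paper's. The paper's argument is geometric: it embeds $\mathcal{N}_\theta$ into the net $\mathcal{N}$ of edges of the ideal triangulation $\G_\theta.\Delta$ of $\uh$, where $\Delta$ is the triangle with vertices $-1$, $1$, $\infty$; it then locates the unique triangle $\Delta_x = A.\Delta$ with vertex $x$ whose interior $h_x$ enters, and argues that any edge of $\mathcal{N}$ straddling $x$ must have radius at least $r_x$ (the largest radius of an edge of $\Delta_x$), since a smaller straddling edge would cut through $\Delta_x$, which is impossible for edges of a triangulation; combined with the observation that an edge joining $\frac{a}{c}$ and $\frac{b}{d}$ has radius $\frac{1}{cd}$, this yields $1 \leq cd \leq \frac{1}{r_x}$ and hence finiteness. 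You replace this geometric step by an elementary Farey-type estimate: writing $x = \frac{p}{q}$, the gaps satisfy $x - \frac{a}{c} \geq \frac{1}{qc}$ and $\frac{b}{d} - x \geq \frac{1}{qd}$, while they must sum to $\frac{2}{cd}$, giving the explicit bound $c + d \leq 2q$, after which the endpoints are pinned down by finitely many choices. Your approach is more self-contained and quantitative: the bound depends only on the denominator of $x$, and you avoid relying on the facts (asserted but not proved in the paper) that $\Delta$ bounds a fundamental domain for $\G_\theta$ and that its orbit triangulates $\uh$, as well as on the existence and uniqueness of $\Delta_x$. What the paper's approach buys in exchange is a stronger conclusion --- $h_x$ crosses only finitely many edges of the full triangulation $\mathcal{N} \supseteq \mathcal{N}_\theta$ --- and the construction of $\Delta_x$, which is reused later (e.g., in Remark \ref{remarkintersectionzero} the triangle $\Delta_0 = S.\Delta$ identifies the single geodesic met by $h_0$).
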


\begin{proof}
Since the sign of intersection does not matter to prove the claim, we will ignore the orientation on the geodesics $\mathcal{N}_\theta$ for the remainder of this proof and refer to $\mathcal{N}_\theta$ as a set of non-oriented geodesics.

Suppose we have a geodesic in $\mathcal{N}_\theta$ connecting $0 < \frac{a}{c} < \frac{b}{d}$, which intersects $h_x$. The maximal radius of any semi-circular geodesic in $\mathcal{N}_\theta$ is $1$, since the radius is given by $$\frac{1}{2} \left( \frac{b}{d} - \frac{a}{c} \right) = \frac{1}{2} \frac{bc - ad}{cd} = \frac{1}{cd} \leq 1.$$

Consider the triangle $\Delta$ with endpoints $-1$, $1$ and $\infty$. Since $\Delta$ is the boundary of a fundamental domain for $\G_\theta$, the orbit of $\Delta$ under $\G_\theta$ gives a triangulation of the upper half plane $\uh$. Denote the collection of geodesic boundaries of the triangulation in $\uh$ by $\mathcal{N}$. In fact, the set $\mathcal{N}_\theta$ is contained in $\mathcal{N}$. We will prove the stronger claim that the geodesic $h_x$ will intersect only finitely many geodesics of $\mathcal{N}$ in $\uh$.

Any geodesic in $\mathcal{N}$ which does intersect $h_x$ is one whose endpoints are such that one is to the left of $x$ and one is to the right of $x$.  

Write $x = \frac{\alpha}{\gamma}$ with $(\alpha, \gamma) = 1$ and $\alpha + \gamma$ odd. Let $A = \SmallMatrix{\alpha}{\beta}{\gamma}{\delta} \in \G_\theta$ be the unique matrix (up to sign) such that $\Delta_x = A.\Delta$ is a triangle, whose interior is intersected by $h_x$ and has $x$ as a vertex. Explicitly, the matrix $A = \pm \SmallMatrix{\alpha}{\beta}{\gamma}{\delta} \in \G_\theta$ is determined by $\gamma > 0$ and $- \gamma < \delta < \gamma$.

\begin{center}
\begin{tikzpicture}
\clip (8, 0) rectangle (16, 5);
\draw (8, 0) -- (16, 0);
\draw [red] (12, 0) circle(2);
\draw (10.25, 0) circle(0.25); 
\draw (12.25, 0) circle(1.75);
\draw (10.5, 0) -- (10.5, 6);
\draw [red] (9.33333, 0) circle(4.6666);
\end{tikzpicture}

Figure 2: Picture of the case $x = \frac{3}{4}$. The red geodesics are those in $\mathcal{N}_\theta$, which are intersected by $h_{3 / 4}$. 
\end{center}

Let $r_x$ be the greatest radius of a geodesic bounding $\Delta_x$. Since $\G_\theta.\Delta$ is a triangulation, the radius of a geodesic in $\mathcal{N}$ whose endpoints lie to the left and right of $x$ is bounded from below by $r_x$: Any geodesic in $\mathcal{N}$, which is such that $x$ lies in between its endpoints, with smaller radius than $r_x$ would have to intersect the triangle $\Delta_x$, which cannot happen.

Thus, for any geodesic from $\frac{a}{c}$ to $\frac{b}{d}$ in $\mathcal{N}$ intersecting $h_x$ there is a lower bound on its radius $\frac{1}{cd}$ (and a fortiori in $\mathcal{N}_\theta$). More precisely, we have $$1 \leq cd \leq \frac{1}{r_x}.$$ Hence, there are only finitely many possible geodesics in $\mathcal{N}$ intersecting $h_x$. 
\end{proof}

\begin{remark} \label{remarkintersectionzero}
For instance, the geodesic $h_0$ from $i \infty$ to $0$ intersects only one geodesic in $\mathcal{N}_\theta$, the one connecting $-1$ and $1$. In the notation of the proof of Lemma \ref{finiteintlem}, the triangle $\Delta_0$ is given by $S.\Delta$. 
\end{remark}

Now we prove that the function $I_\theta(x)$ actually satisfies the three properties defining the Hardy sums $S(a, c)$. 

\begin{thm} \label{geomchar}
The signed intersection number $I_\theta : \Q_\theta \to \Z$ satisfies the following properties for $x \in \Q_\theta$: \begin{enumerate}
    \item $I_\theta(-x) = -I_\theta(x)$, 
    \item $I_\theta(x + 2) = I_\theta(x)$, 
    \item if $x \neq 0$, then $I_\theta(x) + I_\theta \left( \frac{1}{x} \right) = \sgn(x)$. 
\end{enumerate}
Moreover, the function $I_\theta$ is uniquely determined by the properties $(1)$-$(3)$.
\end{thm}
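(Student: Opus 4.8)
The three functional equations will be obtained by transporting the intersection sum \eqref{signedintdefi} along the three symmetries of $\mathcal{N}_\theta$ supplied by Lemma \ref{orientationlem}, and the uniqueness by a Euclidean descent. First I would prove (1) and (2). The map $t_1(z)=-\overline z$ is an orientation-reversing isometry of $\uh$ fixing $i\infty$ with $t_1(x)=-x$, so $t_1(h_x)=h_{-x}$, while $t_2(z)=z+2$ is orientation-preserving with $t_2(h_x)=h_{x+2}$. By Lemma \ref{orientationlem} both maps permute the oriented geodesics of $\mathcal{N}_\theta$. Reindexing $I_\theta(h)=\sum_g\varphi_g(h)$ by $g\mapsto t_i(g)$ and using that an orientation-preserving isometry preserves the left/right label ($\varphi_{t_2(g)}(t_2 h)=\varphi_g(h)$) whereas an orientation-reversing one reverses it ($\varphi_{t_1(g)}(t_1 h)=-\varphi_g(h)$) gives $I_\theta(-x)=-I_\theta(x)$ and $I_\theta(x+2)=I_\theta(x)$ at once. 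In particular (1) with $x=0$ (or Remark \ref{remarkintersectionzero}) yields $I_\theta(0)=0$.

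For (3) I would first reduce to $x>0$: applying (1) to both $x$ and $1/x$ converts the identity for $x$ into the one for $-x$, since $\sgn(-x)=-\sgn(x)$. Fix $x>0$ and consider the ideal triangle $\Delta$ with vertices $i\infty,0,x$, whose counterclockwise boundary is $h(i\infty,0)+h(0,x)+h(x,i\infty)$. As none of $0,x,i\infty$ is of the form $(\text{odd})/(\text{odd})$, no geodesic of $\mathcal{N}_\theta$ has an endpoint at a vertex, so each $g\in\mathcal{N}_\theta$ meets the convex region $\Delta$ transversally in $0$ or $2$ boundary points; in the latter case its two contributions to the signed count along $\partial\Delta$ cancel (the geodesic enters and leaves, with opposite left/right labels). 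Summing over $g$ gives $I_\theta(h(i\infty,0))+I_\theta(h(0,x))+I_\theta(h(x,i\infty))=0$, all sums being finite by the argument of Lemma \ref{finiteintlem}. Since $h(i\infty,0)=h_0$ contributes $I_\theta(0)=0$ and $h(x,i\infty)$ is $h_x$ reversed, contributing $-I_\theta(x)$, this collapses to $I_\theta(h(0,x))=I_\theta(x)$.

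Next I would transport by $t_3(z)=1/\overline z$, for which $t_3(i\infty)=0$ and $t_3(1/x)=x$, so $t_3(h_{1/x})=h(0,x)$. Since $t_3$ is an orientation-reversing involution, $I_\theta(h(0,x))=\sum_{g\in\mathcal{N}_\theta}\varphi_g(t_3 h_{1/x})=-\sum_{g\in t_3(\mathcal{N}_\theta)}\varphi_g(h_{1/x})$, whence
\[
I_\theta(h(0,x))+I_\theta(1/x)=\sum_{g\in\mathcal{N}_\theta}\varphi_g(h_{1/x})-\sum_{g\in t_3(\mathcal{N}_\theta)}\varphi_g(h_{1/x}).
\]
The nonoriented nets underlying $\mathcal{N}_\theta$ and $t_3(\mathcal{N}_\theta)$ coincide, and by Lemma \ref{orientationlem} (together with direct inspection of the geodesics incident to $\pm1$) their orientations agree except on those $g$ for which exactly one of $g,t_3(g)$ joins two integers and is therefore doubly oriented; concretely these are the geodesics $[2k-1,2k+1]$ and their images $t_3([2k-1,2k+1])$ with endpoints $\frac{1}{2k+1},\frac{1}{2k-1}$, for $k\neq0$. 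I would then observe that the first family tiles $\R$ and the second tiles $(0,1)$, so exactly one member of this exceptional collection straddles $1/x>0$; a single tangent-vector computation of $\varphi$ at that lone crossing produces $+1=\sgn(x)$. Combined with $I_\theta(h(0,x))=I_\theta(x)$ this gives (3). The main obstacle is exactly this last step: correctly matching the single-versus-double orientations of the finitely many exceptional geodesics meeting $h_{1/x}$ and verifying that their signed total collapses to the single value $\sgn(x)$; everything before it is formal transport of the sum.

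Finally, for uniqueness I would run a descent. Given any $f:\Q_\theta\to\Z$ satisfying (1)--(3), property (1) forces $f(0)=0$, and (2) gives $f(a)=f(0)=0$ for every even integer $a$. For $x=a/c$ with $c>1$, use (2) to replace $x$ by a translate with $|a|<c$ (possible since $\gcd(a,c)=1$ rules out $|a|=c$), leaving $f$ unchanged, and then (3) to write $f(x)=\sgn(x)-f(1/x)$, where $1/x=c/a$ has strictly smaller denominator $|a|$. Induction on the denominator shows $f$ is completely determined by (1)--(3); since $I_\theta$ satisfies (1)--(3), it is the unique such function.
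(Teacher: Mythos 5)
Your proposal is correct and follows essentially the same route as the paper: properties (1)--(2) by transporting the intersection sum along $t_1,t_2$ via Lemma \ref{orientationlem}, property (3) by combining the ideal-triangle cancellation identity with $t_3$-transport, where the defect $\sgn(x)$ comes from the single exceptional geodesic crossed (doubly-oriented versus singly-oriented under $t_3$, exactly the phenomenon of Remark \ref{whathappens}), and uniqueness by descent on denominators. The differences are only organizational --- you normalize to $x>0$ via (1) where the paper normalizes to $x\in(-1,1)$ via (2), and you apply the triangle identity to $(i\infty,0,x)$ before transporting $h_{1/x}$, while the paper first isolates the exceptional geodesic $g_0$ from $I_\theta(h_x)$ and then transports the remaining sum --- and your bookkeeping of the exceptional families (correctly excluding $k=0$, i.e.\ the $t_3$-invariant geodesic joining $-1$ and $1$) does yield $+1=\sgn(x)$ in both cases $x>1$ and $0<x<1$.
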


As an immediate corollary, we get that the Hardy sums $\Sf : \G_\theta \to \Z$ can be realized as a signed intersection number. 

\begin{cor}
For $A \in \G_\theta$, we have $$\Sf(A) = I_\theta (A.i \infty).$$
\end{cor}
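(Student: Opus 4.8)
The plan is to verify that the function $J : \G_\theta \to \Z$, $J(A) = I_\theta(A.i\infty)$, satisfies the three properties that uniquely characterize $\Sf$ on $\G_\theta = \langle T^2, S \rangle$, and then to invoke that uniqueness. Every step is a translation of the geometric identities of Theorem \ref{geomchar} into the group via the dictionary $A = \SmallMatrix{a}{*}{c}{*} \mapsto A.i\infty = \frac{a}{c}$, together with the fact that $T^2$ acts on the boundary $\R \cup \{i\infty\}$ as $z \mapsto z + 2$ and $S$ acts as $z \mapsto -\frac{1}{z}$. I would first note that $J$ is well-defined and integer-valued, since $h_x$ meets only finitely many geodesics of $\mathcal{N}_\theta$ by Lemma \ref{finiteintlem}, and since $A.i\infty$ ranges exactly over $\Q_\theta = \G_\theta.i\infty$.

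For the normalization, $J(S) = I_\theta(S.i\infty) = I_\theta(0) = 0$ by property (1) of Theorem \ref{geomchar} applied at $x = 0$ (equivalently by Remark \ref{remarkintersectionzero}), while $J(I) = I_\theta(i\infty) = 0$ because $h_{i\infty}$ is degenerate; this matches $\Sf(S) = \Sf(I) = 0$. For the translation invariance, $T^{2n}A.i\infty = A.i\infty + 2n$, so property (2) of Theorem \ref{geomchar}, iterated, gives $J(T^{2n}A) = I_\theta(A.i\infty + 2n) = I_\theta(A.i\infty) = J(A)$, matching $\Sf(T^{2n}A) = \Sf(A)$. For the reciprocity, I would put $x = A.i\infty = \frac{a}{c}$ and use $SA.i\infty = -\frac{1}{x}$; combining properties (1) and (3) of Theorem \ref{geomchar} then yields $J(A) - J(SA) = I_\theta(x) - I_\theta(-\frac{1}{x}) = I_\theta(x) + I_\theta(\frac{1}{x}) = \sgn(x)$, and since $\sgn(\frac{a}{c}) = \sgn(ac)$ for $c \neq 0$ this is exactly $\Sf(A) - \Sf(SA) = \sgn(ac)$.

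Having matched all three defining properties, I would conclude $J = \Sf$ by the uniqueness of this characterization, which is precisely the uniqueness statement established inside the proof of Theorem \ref{geomchar}. The argument carries essentially no obstacle: all of the substantive geometry already lives in Theorem \ref{geomchar}, so the corollary reduces to the sign bookkeeping when converting $I_\theta(-\frac{1}{x})$ into $-I_\theta(\frac{1}{x})$ and to the careful treatment of the two degenerate fixed-point values $A.i\infty \in \{0, i\infty\}$.
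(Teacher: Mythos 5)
Your proposal is correct and is essentially the paper's own (implicit) argument: the paper states this as an immediate corollary of Theorem \ref{geomchar}, precisely because the three properties of $I_\theta$ translate, via the dictionary $A \mapsto A.i\infty$ (with $T^{2n}$ acting as $z \mapsto z + 2n$ and $S$ as $z \mapsto -1/z$), into the three properties uniquely characterizing $\Sf$ on $\G_\theta$. Your verification of the three properties and the appeal to the uniqueness established in the proof of Theorem \ref{geomchar} is exactly the intended reasoning, just written out in full.
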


\begin{proof}[Proof of Theorem \ref{geomchar}]
We first show that $I_\theta(x)$ in (\ref{signedintdefi}) satisfies these properties. For the first property, note that reflection along the imaginary axis sends the geodesic $h_x$ to $h_{-x}$. In particular, reflection does not change the orientation of $h_x$. However, reflection does change the orientation of the geodesics $g \in \mathcal{N}_\theta$ which intersect $h_x$. Since $\mathcal{N}_\theta$ is invariant under reflections along the imaginary axis by Lemma \ref{orientationlem}, we have $-I_\theta(x) = I_\theta(-x)$. 

For the second property, we use that $\mathcal{N}_\theta$ is invariant under translations $z \mapsto z + 2$ -- again, by Lemma \ref{orientationlem} -- and that $h_x \mapsto h_{x + 2}$ under translation. Hence $I_\theta(x + 2) = I_\theta(x)$. 

For the third property, we may assume w.l.o.g. that $x \in (-1, 1)$ by the second property. 
Let $n_0$ be an odd integer, such that $n_0 < \frac{1}{x} < n_0 + 2$. Let $g_0$ be the geodesic connecting $\frac{1}{n_0}$ and $\frac{1}{n_0 + 2}$. 

In $\mathcal{N}_\theta$ the geodesic $g_0$ is oriented from $\frac{1}{n_0}$ to $\frac{1}{n_0 + 2}$ if $n_0$ is positive (i.e. $\sgn(x) > 0$) and from $\frac{-1}{\vert n_0 \vert - 2}$ to $\frac{-1}{\vert n_0 \vert}$ if $n_0$ is negative (i.e. if $\sgn(x) < 0$). The geodesic $g_1$ from $n_0$ to $n_0 + 2$ has two opposing orientations in $\mathcal{N}_\theta$.

We write \begin{equation} \label{signphi}
    I_\theta(h_x) = \varphi_{g_0}(h_x) + \sum_{g \in \mathcal{N}_\theta - \{g_0\}} \varphi_g(h_x) = \sgn(x) + \sum_{g \in \mathcal{N}_\theta - \{g_0\}} \varphi_g(h_x).
\end{equation}

We have $t_3(h_x) = h(0, 1 / x)$ and by Lemma \ref{orientationlem}, we see that $$\sum_{g \in \mathcal{N}_\theta - \{g_0\}} \varphi_g(h_x) = - \sum_{g \in \mathcal{N}_\theta - \{g_1\}} \varphi_g(h(0, 1 / x)) = - I_\theta (h(0, 1 / x)),$$
since $t_3$ preserves the orientations on $\mathcal{N}_\theta$, except for geodesics connecting points $\frac{1}{n_0}$ and $\frac{1}{n_0 + 2}$ like $g_0$; see Remark \ref{whathappens}. It should also be noted that the doubly-oriented geodesic connecting $-1$ and $1$, which is intersected by both $h_x$ and $h(0, 1 / x)$ gets mapped to itself under $t_3$.

Consider the triangle of oriented geodesics from $h_0$, $h(0, 1 / x)$ and $h(1 / x, i \infty)$. It is clear that $I_\theta(h(1 / x, i \infty)) = - I_\theta(h_{1 / x})$ and that $I_\theta(h_0) + I_\theta(h(0, 1 / x)) + I_\theta(h(1 / x, i \infty)) = 0$. But by Remark \ref{remarkintersectionzero}, we have $I_\theta(h_0) = 0$, hence $I_\theta(h(0, 1 / x))= I_\theta(h_{1 / x})$, which proves the third property. 

\smallskip

Let us now show uniqueness: Let $I_1(x)$ and $I_2(x)$ be two functions satisfying the three properties. From the first property, it follows that $I_1(0) = I_2(0) = 0$. By the second property the difference $I_3(x) = I_1(x) - I_2(x)$ is zero at every integer $x \in 2\Z$. 

Suppose that $I_3(a / c)$ is zero for all $\frac{a}{c} \in \Q_\theta$, $(a, c) = 1$ with $1 \leq c < c'$. By what we just showed, this is true for $c' = 2$. We now show that the function $I_3$ also vanishes for $\frac{a'}{c'}$ with $(a', c') = 1$ and $a' + c'$ being odd, from which the claim will follow by induction. By the second property, we may assume w.l.o.g. that $-c' < a' < c'$. Using the third property, we see that $I_3(x) = - I_3\left( \frac{1}{x} \right)$. But $$I_3\left( \frac{a'}{c'} \right) = - I_3\left( \frac{c'}{a'} \right) = 0$$
by our assumption (since $\vert a' \vert < c'$).
\end{proof}

\begin{remark}
There is an analog for $\Sf_4 : \G^0(2) \to \Q$ which is obtained as an intersection number by shifting the net $\mathcal{N}_\theta$ with $z \mapsto z + 1$ and reversing the orientation of the geodesics. This follows from the fact $\Sf(x + 1) = - \Sf_4(x)$.
\end{remark}

\subsection{The Rademacher Cocycle}
We continue our investigation into the geometric nature of the function $\Sf$. We will now show that it is in fact \textit{a} splitting of the Rademacher cocycle (\ref{radecocycledefi}) restricted to the subgroup $\Gamma_\theta$. On the full modular group $\SL$, the Rademacher cocycle splits to the Dedekind symbol (\ref{phidefi}), i.e. for $A, B \in \SL$ with $A = \SmallMatrix{*}{*}{c_A}{*}$, $B = \SmallMatrix{*}{*}{c_B}{*}$, and $AB = \SmallMatrix{*}{*}{c_{AB}}{*}$, we have \begin{equation} \label{radecoceq}
    \Phi(AB) - \Phi(A) - \Phi(B) = - \sgn(c_A c_B c_{AB}).
\end{equation}

\begin{remark}
The sign function in (\ref{radecoceq}) is defined such that $\sgn(0) = 0$.
\end{remark}

\begin{thm} \label{radecocthm}
Let $A, B \in \G_\theta$ with $A = \SmallMatrix{*}{*}{c_A}{*}$, $B = \SmallMatrix{*}{*}{c_B}{*}$, and $AB = \SmallMatrix{*}{*}{c_{AB}}{*}$. We have $$\Sf(AB) - \Sf(A) - \Sf(B) = - \sgn(c_A c_B c_{AB}).$$ 
\end{thm}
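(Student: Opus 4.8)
The plan is to deduce the cocycle identity from two properties of the intersection pairing: its additivity over ideal triangles and the near-equivariance of $I_\theta$ under $\G_\theta$. First I would record, for oriented geodesics $h(z_1,z_2)$ with $z_1,z_2 \in \Q_\theta \cup \{i\infty\}$, that reversing orientation negates the count, $I_\theta(h(z_1,z_2)) = -I_\theta(h(z_2,z_1))$, and that $I_\theta$ is additive around a triangle, $I_\theta(h(z_1,z_2)) + I_\theta(h(z_2,z_3)) + I_\theta(h(z_3,z_1)) = 0$. The additivity holds because for each fixed $g \in \mathcal{N}_\theta$ the three contributions $\varphi_g$ sum to the signed intersection of $g$ with a null-homotopic closed curve, which is $0$; here one uses that no geodesic of $\mathcal{N}_\theta$ has an endpoint among $z_1,z_2,z_3$, since the endpoints of $\mathcal{N}_\theta$ are the rationals with both coordinates odd while $\Q_\theta$ consists of rationals with coordinates of opposite parity. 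Applying additivity to $(i\infty, A.i\infty, AB.i\infty)$ and using $h(A.i\infty, AB.i\infty) = A.h(i\infty, B.i\infty)$ gives
\[
\Sf(AB) - \Sf(A) - \Sf(B) = I_\theta(A.h(i\infty, B.i\infty)) - I_\theta(h(i\infty, B.i\infty)),
\]
so the coboundary of $\Sf$ is exactly the failure of $I_\theta$ to be $\G_\theta$-invariant.

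Next I would reduce to the case where $B$ is a generator. Since the Dedekind symbol satisfies $\Phi(AB) - \Phi(A) - \Phi(B) = -\sgn(c_A c_B c_{AB})$ on all of $\SL$, the assertion is equivalent to $\Phi - \Sf$ being a homomorphism on $\G_\theta$, which is Theorem \ref{mainthm1}. A map $f$ is a homomorphism as soon as $f(As) = f(A) + f(s)$ holds for every $A$ and every $s$ in a generating set, by induction on the word length of $B$; hence it suffices to verify the identity for $B \in \{S, T^2, T^{-2}\}$ and arbitrary $A \in \G_\theta$.

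For $B = T^{\pm 2}$ both sides vanish: the right-hand side is $-\sgn(c_A \cdot 0 \cdot c_{AB}) = 0$, while $\Sf(T^{\pm 2}) = 0$ and $\Sf(AT^{\pm 2}) = I_\theta(AT^{\pm 2}.i\infty) = I_\theta(A.i\infty) = \Sf(A)$ since $T^{\pm 2}$ fixes $i\infty$. The case $B = S$ is the crux. Writing $A = \SmallMatrix{a}{b}{c}{d}$, one has $\Sf(S) = I_\theta(0) = 0$ and $c_{AS} = d$, so the identity reads $I_\theta(b/d) - I_\theta(a/c) = -\sgn(cd)$. I would prove this by induction on the word length of $A$ in $S, T^{\pm 2}$, using the three properties of $I_\theta$ from Theorem \ref{geomchar}. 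Peeling off a leading $T^{\pm 2}$ leaves both sides unchanged. Peeling off a leading $S$, say $A = S A'$ with $A' = \SmallMatrix{a'}{b'}{c'}{d'}$, transforms the left-hand side, via oddness and the inversion relation $I_\theta(1/x) = \sgn(x) - I_\theta(x)$, into $I_\theta(b'/d') - I_\theta(a'/c') + \sgn(a'c') - \sgn(b'd')$; the induction hypothesis identifies the first difference with $-\sgn(c'd')$, leaving the combinatorial identity
\[
-\sgn(c'd') + \sgn(a'c') - \sgn(b'd') = -\sgn(a'b').
\]
For nonzero entries this is $(\sgn(a') - \sgn(d'))(\sgn(b') + \sgn(c')) = 0$, which follows from $a'd' - b'c' = 1$: if $a'd' < 0$ then $b'c' = a'd' - 1 < 0$. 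The base cases $c = 0$ or $d = 0$ (forcing $A = \pm T^{2n}$ or $A = \pm S T^{2n}$) are checked directly.

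The main obstacle I expect is this $B = S$ computation: fixing the orientation conventions so the signs $\sgn(a'c'), \sgn(b'd')$ come out correctly when some entries vanish (so $\sgn$ takes the value $0$), and confirming that each reduction strictly shortens the word so the induction is well-founded. A secondary technical point is to justify triangle additivity for geodesics with two finite endpoints, which requires extending the finiteness in Lemma \ref{finiteintlem} to $h(z_1,z_2)$; this follows from the same radius bound, as any geodesic of $\mathcal{N}_\theta$ meeting a fixed bounded semicircle has radius bounded below.
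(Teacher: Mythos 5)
Your proof is correct, but it takes a genuinely different route from the paper's. The paper never invokes the classical splitting (\ref{radecoceq}) of the Rademacher cocycle in proving Theorem \ref{radecocthm}; its core is the geometric Lemma \ref{helpprop}, which computes the failure of equivariance of $I_\theta$ directly by tracking how $A$ transports the net $\mathcal{N}_\theta$ and, in particular, the doubly-oriented geodesics (these produce the $\sgn$ term), and the general case is then reduced to that lemma via Lemma \ref{reducelem}, Proposition \ref{invprop} and Lemma \ref{wordlem}. You instead take (\ref{radecoceq}) as an external input -- legitimate, since the paper states it as known, with references -- which converts the theorem into the statement that $\Phi - \Sf$ is a homomorphism; this is then amenable to your two nested word-length inductions, whose only nontrivial content is the case $B = S$, handled purely formally from the three properties of Theorem \ref{geomchar} plus the sign identity $\left(\sgn(a') - \sgn(d')\right)\left(\sgn(b') + \sgn(c')\right) = 0$ coming from $a'd' - b'c' = 1$. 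I checked this computation, including the degenerate cases: with the convention $\sgn(0) = 0$ the factorized identity covers all vanishing-entry cases (if $a'd' \le 0$ then $b'c' = a'd' - 1 < 0$), and the reciprocity step in the form $\Sf(SM) = \Sf(M) - \sgn(a_M c_M)$ remains valid when entries of $M$ vanish, so the ``main obstacle'' you flag is indeed only routine bookkeeping. What each approach buys: yours is shorter and shows that Theorem \ref{radecocthm} is a formal consequence of Theorem \ref{geomchar} together with Asai's classical result, with no further geometry; the paper's argument is self-contained and elementary in the sense that it proves the cocycle identity for $\Sf$ independently and only afterwards deduces Theorem \ref{mainthm1} using (\ref{radecoceq}) -- your proof inverts that logical order. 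Two small corrections: your opening paragraph on triangle additivity is never actually used in your induction and can be dropped (it is, however, essentially the starting point of the paper's Lemma \ref{helpprop}); and the base case $d = 0$ forces $A = \pm T^{2n}S$, whereas it is $a = 0$ that forces $A = \pm S T^{2n}$.
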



From property (3) of Theorem \ref{geomchar} (the reciprocity formula), it follows that \begin{equation} \label{reci}
    \Sf(SA) - \Sf(A)= \sgn(a c)
\end{equation}
for $A = \SmallMatrix{a}{b}{c}{d} \in \G_\theta$, which, since $\Sf(S) = 0$, proves Theorem \ref{radecocthm} in the case $B = S$.

We now proceed to prove Theorem \ref{radecocthm} through a series of proposition and lemmas.

\begin{lem} \label{reducelem}
Let $A, B \in \G_\theta$ and write $A = \SmallMatrix{a_A}{b_A}{c_A}{d_A}$, $B = \SmallMatrix{a_B}{b_B}{c_B}{d_B}$, and $AB = \SmallMatrix{*}{*}{c_{AB}}{*}$. We have $$\Sf(AB) - \Sf(A) - \Sf(B) = - \sgn(c_A c_B c_{AB})$$ if and only if $$\Sf(A'B') - \Sf(A') - \Sf(B') = - \sgn(c_{A'} c_{B'} c_{A'B'}),$$ where $A' = AS = \SmallMatrix{a_{A'}}{b_{A'}}{c_{A'}}{d_{A'}}$, $B' = SB =  \SmallMatrix{a_{B'}}{b_{B'}}{c_{B'}}{d_{B'}}$ and $A'B' = \SmallMatrix{*}{*}{c_{A'B'}}{*}$.
\end{lem}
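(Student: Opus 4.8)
The plan is to prove the sharper statement that the two ``error terms'' coincide, namely
$$\Sf(AB) - \Sf(A) - \Sf(B) + \sgn(c_A c_B c_{AB}) = \Sf(A'B') - \Sf(A') - \Sf(B') + \sgn(c_{A'} c_{B'} c_{A'B'}),$$
from which the asserted equivalence is immediate, since each side vanishes exactly when the corresponding cocycle identity holds. The starting observation is the algebraic identity $A'B' = (AS)(SB) = A S^2 B = -AB$, because $S^2 = -I$. As $\Sf$ is invariant under $M \mapsto -M$ (e.g.\ since $\Sf(M) = I_\theta(M.i\infty)$ and $(-M).i\infty = M.i\infty$), this yields $\Sf(A'B') = \Sf(AB)$, while on the level of lower-left entries $c_{A'B'} = -c_{AB}$, so that $\sgn(c_{A'B'}) = -\sgn(c_{AB})$.

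Next I would rewrite the individual terms $\Sf(A') = \Sf(AS)$ and $\Sf(B') = \Sf(SB)$ back in terms of $\Sf(A)$ and $\Sf(B)$, using the already-disposed-of cases of Theorem \ref{radecocthm} in which one factor equals $S$. A direct multiplication gives $A' = AS = \SmallMatrix{b_A}{-a_A}{d_A}{-c_A}$ and $B' = SB = \SmallMatrix{-c_B}{-d_B}{a_B}{b_B}$, so that $c_{A'} = d_A$ and $c_{B'} = a_B$. The relation (\ref{reci}) (left multiplication by $S$) supplies $\Sf(SB) - \Sf(B)$ directly, and the right-multiplication relation for $\Sf(AS)$ follows from (\ref{reci}) together with the inversion identity $\Sf(M^{-1}) = -\Sf(M)$ of Proposition \ref{invprop}, applied to $M = A^{-1}$ and using $(AS)^{-1} = -SA^{-1}$. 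In this way $\Sf(AS) - \Sf(A)$ and $\Sf(SB) - \Sf(B)$ are each replaced by an explicit $\pm\sgn$-term.

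Substituting these, together with $\Sf(A'B') = \Sf(AB)$, $c_{A'} = d_A$, $c_{B'} = a_B$, and $c_{A'B'} = -c_{AB}$, into the desired equality, all the $\Sf$-values cancel and the claim collapses to the purely elementary sign identity
$$\sgn(c_A d_A) + \sgn(a_B c_B) = \sgn(c_A c_B c_{AB}) + \sgn(d_A a_B c_{AB}),$$
where crucially $c_{AB} = c_A a_B + d_A c_B$ is the lower-left entry of the product $AB$ coming from matrix multiplication. I expect this final sign bookkeeping to be the main, if routine, obstacle: it is a case analysis on the signs of the four integers $c_A, d_A, a_B, c_B$ (with $c_{AB}$ then determined), in which the vanishing cases must be handled separately. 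Here one uses that $c_A, d_A$ (resp.\ $a_B, c_B$) cannot vanish simultaneously because $\det A = 1$ (resp.\ $\det B = 1$), and one checks that when $c_{AB} = 0$ the relation $c_A a_B = -d_A c_B$ forces $\sgn(c_A d_A)$ and $\sgn(a_B c_B)$ to be opposite, so the two sides still agree. Once this identity is verified, the equivalence, and hence the lemma, follows.
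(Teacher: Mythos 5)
Your proposal follows the same skeleton as the paper's proof: the identity $A'B' = (AS)(SB) = -AB$ (whence $\Sf(A'B') = \Sf(AB)$, $c_{A'B'} = -c_{AB}$, together with $c_{A'} = d_A$, $c_{B'} = a_B$), the replacement of $\Sf(A')$ and $\Sf(B')$ by $\Sf(A)$ and $\Sf(B)$ plus sign corrections, and a final round of sign bookkeeping. Your closing identity $\sgn(c_A d_A) + \sgn(a_B c_B) = \sgn(c_A c_B c_{AB}) + \sgn(d_A a_B c_{AB})$, with $c_{AB} = c_A a_B + d_A c_B$, is correct (including the degenerate cases you flag), and it is equivalent to the paper's case distinction on $\sgn(c_{A'} c_{B'})$ after the normalization $c_A, c_B > 0$; phrasing the lemma as the equality of the two error terms is a clean way to obtain both directions of the equivalence at once.

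The genuine problem is the source of your right-multiplication rule $\Sf(AS) - \Sf(A) = -\sgn(c_A d_A)$: you derive it from the inversion identity $\Sf(M^{-1}) = -\Sf(M)$ of Proposition \ref{invprop}. But in the paper, Proposition \ref{invprop} comes \emph{after} Lemma \ref{reducelem} and is proved by induction on the word length of $A$ in the generators $T^2, S$, with an inductive step that explicitly invokes Lemma \ref{reducelem}. Spliced into the paper, your argument is therefore circular: Lemma \ref{reducelem} would rest on Proposition \ref{invprop}, whose proof rests on Lemma \ref{reducelem}. You correctly sensed the subtlety that (\ref{reci}) is a left-multiplication rule and does not formally yield the right-multiplication rule without some inversion-type input -- a point the paper passes over quickly, since it quotes both $\Sf(SB) = \Sf(B) - \sgn(a_B c_B)$ and $\Sf(AS) = \Sf(A) - \sgn(c_A d_A)$ as consequences of the reciprocity formula (\ref{reci}) -- but the repair cannot go through Proposition \ref{invprop}. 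To make your proof non-circular you must justify $\Sf(AS) - \Sf(A) = -\sgn(c_A d_A)$ independently: either at the level of (\ref{reci}) and Theorem \ref{geomchar}, as the paper does (this is the case of Theorem \ref{radecocthm} with one factor equal to $S$, which the paper disposes of before Lemma \ref{reducelem}), or by an independent elementary or geometric proof of the inversion identity. As written, this single step is a genuine gap; everything else in your argument is sound.
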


\begin{proof}
Due to the symmetry of the claim, it suffices to prove that if $\Sf(A'B') - \Sf(A') - \Sf(B') = - \sgn(c_{A'} c_{B'} c_{A'B'})$ holds, then $\Sf(AB) - \Sf(A) - \Sf(B) = - \sgn(c_{AB} c_A c_B)$ holds as well. 

Up to replacing $A$ with $-A$ and $B$ with $-B$ we may assume w.l.o.g. that $c_A, c_B > 0$. For the matrices $A'$ and $B'$ we have $A' B' = -AB$, hence $c_{A'B'} = - c_{AB}$,  $c_{A'} = d_A$ and $c_{B'} = a_B$. By the reciprocity formula (\ref{reci}), we have $$\Sf(B') = \Sf(B) - \sgn(a_B c_B)$$
and \begin{equation*}
    \Sf(A') = \Sf(A) - \sgn(d_A c_A).
\end{equation*}

It hence follows that $$\Sf(-AB) - \Sf(A') - \Sf(B') = \Sf(AB) - \Sf(A) - \Sf(B) + \sgn(c_A d_A) + \sgn(a_B c_B).$$

If $\sgn(c_{A'} c_{B'}) < 0$, then $$\sgn(c_A d_A) + \sgn(a_B c_B) = \sgn(c_{A'}) + \sgn(c_{B'}) = 0,$$
thus $$\Sf(AB) - \Sf(A) - \Sf(B) = \Sf(A'B') - \Sf(A') - \Sf(B') = \sgn(c_{A'B'}) = - \sgn(c_{AB}).$$

If $\sgn(c_{A'} c_{B'}) > 0$, then $$\sgn(c_A d_A) + \sgn(a_B c_B) = \sgn(c_{A'}) + \sgn(c_{B'}) = 2 \; \sgn(c_{A'}) = 2 \; \sgn(c_{B'}).$$ However, as $- c_{A'B'} = c_{AB} = c_{A'}c_A + c_{B'} c_B$, we have $\sgn(c_{A'B'}) = - \sgn(c_{A'}) = - \sgn(c_{B'})$. Thus, $$\Sf(AB) - \Sf(A) - \Sf(B) = - \sgn(c_{A'B'}) - 2 \; \sgn(c_{A'}) = \sgn(c_{A'B'}) = - \sgn(c_{AB}).$$
\end{proof}

With Lemma \ref{reducelem}, we may directly prove Theorem \ref{radecocthm} for the special case $B = A^{-1}$.

\begin{prop} \label{invprop}
For all $A \in \G_\theta$, we have $\Sf(A^{-1}) = - \Sf(A)$. 
\end{prop}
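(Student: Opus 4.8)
The plan is to deduce the proposition directly from Lemma \ref{reducelem} by observing that the asserted identity $\Sf(A^{-1}) = -\Sf(A)$ is exactly the Rademacher cocycle relation of Theorem \ref{radecocthm} for the pair $(A, A^{-1})$. Indeed, $AA^{-1} = I$ has vanishing lower-left entry, so the right-hand side is $-\sgn(c_A c_{A^{-1}} c_I) = -\sgn(0) = 0$, and $\Sf(I) = 0$; hence the relation for $(A, A^{-1})$ collapses to $-\Sf(A) - \Sf(A^{-1}) = 0$, which is the claim. Writing $\mathrm{P}(M)$ for the assertion ``$\Sf(M^{-1}) = -\Sf(M)$'', the goal becomes $\mathrm{P}(A)$ for every $A \in \G_\theta$.

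First I would convert Lemma \ref{reducelem} into a right-translation symmetry of $\mathrm{P}$. Applying the lemma to the pair $(A, A^{-1})$ shows that the cocycle relation for $(A, A^{-1})$ holds if and only if it holds for $(AS, SA^{-1})$. The product $AS \cdot SA^{-1} = A S^2 A^{-1} = -I$ again has vanishing lower-left entry, and $SA^{-1} = -(AS)^{-1}$ because $S^{-1} = -S$; therefore $\Sf(SA^{-1}) = \Sf((AS)^{-1})$, using that $\Sf$ is insensitive to the central sign since $\Sf(M) = I_\theta(M.i\infty)$ factors through the Möbius action by the Corollary to Theorem \ref{geomchar}. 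Thus the cocycle relation for $(AS, SA^{-1})$ is precisely $\mathrm{P}(AS)$, and Lemma \ref{reducelem} yields the equivalence
$$\mathrm{P}(A) \Longleftrightarrow \mathrm{P}(AS).$$

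Next I would record the companion invariance under right multiplication by $T^{2n}$. Since $\Sf(M) = S(a, c)$ depends only on the first column and right multiplication by $T^{2n}$ leaves the first column unchanged, $\Sf(MT^{2n}) = \Sf(M)$; moreover $(MT^{2n})^{-1} = T^{-2n}M^{-1}$, so $\Sf((MT^{2n})^{-1}) = \Sf(M^{-1})$ by the left-invariance property (2). Hence $\mathrm{P}(M) \Longleftrightarrow \mathrm{P}(MT^{2n})$ for all $M$ and $n$. Both equivalences hold for \emph{every} $M \in \G_\theta$, so $\mathrm{P}$ is preserved under right multiplication by either generator, and therefore (by an immediate induction on word length) under right multiplication by any element: $\mathrm{P}(M) \Longleftrightarrow \mathrm{P}(Mw)$ for all $w \in \G_\theta = \langle T^2, S \rangle$.

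Finally, taking $w = A^{-1}$ gives $\mathrm{P}(A) \Longleftrightarrow \mathrm{P}(AA^{-1}) = \mathrm{P}(I)$, and $\mathrm{P}(I)$ holds trivially because $\Sf(I) = 0$; this proves $\mathrm{P}(A)$ for all $A$. I expect the only delicate point to be the bookkeeping in the second paragraph — verifying that both cocycle relations degenerate to $\mathrm{P}$-statements, in particular the identity $SA^{-1} = -(AS)^{-1}$ and the harmlessness of the central sign — since everything afterward is a formal consequence once the equivalence $\mathrm{P}(A) \Longleftrightarrow \mathrm{P}(AS)$ is established.
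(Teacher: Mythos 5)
Your proof is correct and is essentially the paper's own argument in different packaging: both hinge on recognizing $\Sf(A^{-1}) = -\Sf(A)$ as the degenerate cocycle relation for the pair $(A, A^{-1})$ (where $c_I = 0$ kills the right-hand side), feeding it through Lemma \ref{reducelem}, and inducting on word length in the generators $T^2, S$ together with the $T^2$-invariance and sign-insensitivity of $\Sf$. The only difference is organizational — the paper writes $A = T^{2n_1}S\cdots T^{2n_r}S$ and peels the word from the left by induction on $r$, whereas you transport $A$ to the identity by right multiplication — so the core mechanism is identical.
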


\begin{proof}
Since $\Sf(A)$ is invariant under multiplication on the right by $T^2$ and under multiplication by $-I$, we may assume w.l.o.g. that $A = T^{2n_1} S T^{2n_2} S \cdots T^{2n_r} S$ for integers $n_1, ..., n_r$ and $r \geq 1$. The proof is by induction on $r$. 

Suppose $r = 1$. Then $\Sf(T^{2n_1} S) = \Sf(2n_1) = 0$ and $\Sf(S^{-1} T^{-2n_1}) = \Sf(0) = 0$ by Theorem \ref{geomchar}, so the claim is true. 

Suppose that $r > 1$ and the claim holds for $r - 1$. We have $$\Sf(A^{-1}) = \Sf(S T^{-2n_r} \cdots T^{- 2 n_2} S T^{-2n_1}) = \Sf(S T^{-2n_r} \cdots T^{- 2 n_2} S).$$ By the induction hypothesis we have $$\Sf((T^{2n_2} S \cdots T^{2n_r}S)^{-1}) = - \Sf(T^{2n_2} S \cdots T^{2n_r}S),$$
which by Lemma \ref{reducelem} implies that $$\Sf((T^{2n_2} S \cdots T^{2n_r}S)^{-1} S) = \Sf(S T^{-2n_r} \cdots T^{- 2 n_2} S) = - \Sf(S T^{2n_2} S \cdots T^{2n_r}S).$$

Since $\Sf(S T^{2n_2} S \cdots T^{2n_r}S) = \Sf(T^{2n_1} S T^{2n_2} S \cdots T^{2n_r}S)$ by Theorem \ref{geomchar}, the claim follows.
\end{proof}

\begin{lem} \label{wordlem}
Let $A = \SmallMatrix{a}{b}{c}{d} \in \G_\theta$ and write $A = \pm T^{2n_0} S \cdots T^{2n_r} S T^{2n_{r + 1}}$ for integers $n_0, ..., n_{r + 1}$ and $n_1, n_2, ..., n_r \neq 0$. We have $\frac{a}{c} \in (-1, 1)$ if and only if $n_0 = 0$.
\end{lem}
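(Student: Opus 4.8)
The plan is to strip off the leading factor $T^{2n_0}$ and reduce everything to a statement about the remaining word, which does not involve $n_0$ at all. Writing $A = \pm T^{2n_0} C$ with $C = S T^{2n_1} S \cdots S T^{2n_{r+1}}$, the Möbius action gives
$$\frac{a}{c} = A.i\infty = C.i\infty + 2n_0,$$
since the factor $\pm I$ acts trivially and $T^{2n_0}$ translates by $2n_0$. As $C$ is independent of $n_0$, the whole lemma follows once I show that $C.i\infty \in (-1,1)$ for every word $C$ of this admissible shape. Granting this, $\frac{a}{c}$ lies in the open interval $(2n_0 - 1,\, 2n_0 + 1)$; these intervals are pairwise disjoint as $n_0$ ranges over $\Z$, and $(-1,1)$ is exactly the one indexed by $n_0 = 0$, so $\frac{a}{c} \in (-1,1)$ if and only if $n_0 = 0$.

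To prove that $C.i\infty \in (-1,1)$ I would induct on the number $r+1 \geq 1$ of occurrences of $S$ in $C$. For the base case $r = 0$ we have $C = S T^{2n_1}$, so $C.i\infty = S.(T^{2n_1}.i\infty) = S.i\infty = 0 \in (-1,1)$. For the inductive step ($r \geq 1$), write $C = S T^{2n_1} C'$ with $C' = S T^{2n_2} S \cdots S T^{2n_{r+1}}$; this $C'$ is again admissible, since its interior exponents $n_2, \dots, n_r$ are nonzero, so by induction $\xi := C'.i\infty \in (-1,1)$. Then
$$C.i\infty = S.\bigl(T^{2n_1}.\xi\bigr) = \frac{-1}{2n_1 + \xi}.$$
Because $n_1 \neq 0$ once $r \geq 1$ (it is now an interior exponent), we have $\lvert 2n_1 + \xi \rvert > 1$: indeed $2n_1 + \xi > 1$ when $n_1 \geq 1$ and $2n_1 + \xi < -1$ when $n_1 \leq -1$, using $\xi \in (-1,1)$. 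Hence $\lvert C.i\infty \rvert = 1/\lvert 2n_1 + \xi\rvert < 1$, which closes the induction.

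The only genuinely delicate point is the bookkeeping in the induction: one must verify that peeling $S T^{2n_1}$ off the front again produces a word of the admissible form, and that the newly exposed leading exponent $n_1$ is nonzero precisely in the regime $r \geq 1$ where the estimate $\lvert 2n_1 + \xi \rvert > 1$ is required. Beyond this indexing everything reduces to the elementary fractional-linear bound above, and I expect no further obstacle. In particular, the endpoints $\pm 1$ never intervene, because $C.i\infty$ always lands \emph{strictly} inside $(-1,1)$; combined with the disjointness of the translated intervals $(2n_0 - 1, 2n_0 + 1)$, this yields the claimed equivalence cleanly.
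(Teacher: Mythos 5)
Your proof is correct and takes essentially the same route as the paper's: an induction that peels the word from the left, using the elementary facts that $T^{2n}$ translates by $2n$ and that $S$ maps points outside $[-1,1]$ into $(-1,1)$. The only difference is organizational --- you carry the unconditional invariant $C.i\infty \in (-1,1)$ for words beginning with $S$ instead of the biconditional itself, which has the minor virtue of keeping every inequality strict throughout the induction, whereas the paper's step from ``$\frac{a'}{c'}$ lies outside $(-1,1)$'' to ``$-\frac{c'}{a'} \in (-1,1)$'' tacitly uses that the value never equals $\pm 1$.
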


\begin{proof}
By induction on $r$. For $r = 0$, we have $\frac{a}{c} = T^{2n_0} S.i\infty = 2n_0$, for which the claim is obviously true. 

Suppose $r > 0$ and that the claim holds for $1, ..., r - 1$. Let $$A = \pm T^{2n_0} S T^{2n_1} S \cdots T^{2n_r} S T^{2n_{r + 1}} \; \; \mathrm{with} \; n_1, ..., n_r \neq 0;$$
since $S^2 = - I$, there is always such a representation. Let $\frac{a'}{c'} = T^{2n_1} S \cdots T^{2n_r} S.i\infty$, which by the induction hypothesis lies outside $(-1, 1)$. Then $$\frac{a}{c} = T^{2n_0} S T^{2n_1} \cdots T^{2n_r} S.i \infty = T^{2n_0} S.\frac{a'}{c'} = - \frac{c'}{a'} + 2n_0$$ and, as $- \frac{c'}{a'} \in (-1, 1)$, it is easy to see that $\frac{a}{c} \in (-1, 1)$ if and only if $n_0 = 0$. 
\end{proof}

The next lemma proves Theorem \ref{radecocthm} under certain extra conditions, to which the general case will be reduced.

\begin{lem} \label{helpprop}
Let $A, B \in \G_\theta$ and write $A = \SmallMatrix{a_A}{b_A}{c_A}{d_A}$, $B = \SmallMatrix{a_B}{b_B}{c_B}{d_B}$, and $AB = \SmallMatrix{a_{AB}}{b_{AB}}{c_{AB}}{d_{AB}}$. Suppose that $- \frac{d_A}{c_A} \in (-1, 1)$, $\alpha = \frac{a_A}{c_A} \in (-1, 1)$, and $\beta = \frac{a_B}{c_B} \notin (-1, 1)$. Then $$\Sf(AB) - \Sf(A) - \Sf(B) = - \sgn(c_A c_B c_{AB}).$$
 
\end{lem}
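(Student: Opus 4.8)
The plan is to prove Lemma \ref{helpprop} by a direct geometric computation of the signed intersection numbers $I_\theta$ appearing in $\Sf(AB)-\Sf(A)-\Sf(B)$, exploiting the hypotheses to pin down the geometry of the relevant geodesics and the sign $-\sgn(c_Ac_Bc_{AB})$. Recall from the Corollary that $\Sf(M)=I_\theta(M.i\infty)$ for $M\in\G_\theta$, so I want to understand $I_\theta(AB.i\infty)$, $I_\theta(A.i\infty)=I_\theta(\alpha)$, and $I_\theta(B.i\infty)=I_\theta(\beta)$. The hypotheses $\alpha=\tfrac{a_A}{c_A}\in(-1,1)$, $-\tfrac{d_A}{c_A}\in(-1,1)$ (this is $A^{-1}.i\infty$ up to sign, controlling where $A$ sends $0$), and $\beta=\tfrac{a_B}{c_B}\notin(-1,1)$ are precisely what Lemma \ref{wordlem} lets me translate into the leading/trailing exponents of the reduced words for $A$ and $B$, so the first step is to use Lemma \ref{wordlem} to record that $A$ has trailing data making $\alpha\in(-1,1)$ while $B$ has $n_0\ne 0$ in its word.

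\textbf{First} I would reduce $\Sf(AB)$ to a signed intersection number I can compute, using that $AB.i\infty = A.\beta = A.(B.i\infty)$. The key geometric idea is that the map induced by $A$ sends the geodesic $h_\beta=h(i\infty,\beta)$ to the geodesic $h(A.i\infty, A.\beta)=h(\alpha, AB.i\infty)$, and by Remark \ref{orientationA} the map induced by $A$ preserves the net $\mathcal N_\theta$ up to the finitely many doubly-oriented exceptional geodesics. Thus the intersection number $I_\theta(AB.i\infty)$ can be related to $I_\theta(\beta)$ plus correction terms measuring (i) the difference between $h_{AB.i\infty}$ and the image geodesic $h(\alpha, AB.i\infty)$, via the oriented-triangle relation $I_\theta(h(z_1,z_2))+I_\theta(h(z_2,z_3))+I_\theta(h(z_3,z_1))=0$ already used in the proof of Theorem \ref{geomchar}, and (ii) the exceptional geodesics of Remark \ref{orientationA} whose images are lost. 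The plan is to decompose $I_\theta(AB.i\infty)$ using a triangle with vertices $i\infty$, $\alpha$, $AB.i\infty$ so that $I_\theta(AB.i\infty) = I_\theta(\alpha) + I_\theta(h(\alpha, AB.i\infty)) $, and then identify $I_\theta(h(\alpha, AB.i\infty))$ with $\pm I_\theta(\beta)$ up to the exceptional contribution.

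\textbf{Next} I would evaluate that exceptional contribution, which is where the sign $-\sgn(c_Ac_Bc_{AB})$ must emerge. The hypotheses are tailored so that essentially one exceptional geodesic from Remark \ref{orientationA} is in play (the condition $-\tfrac{d_A}{c_A}\in(-1,1)$ controls the preimage of the relevant doubly-oriented geodesic under $A$, and $\beta\notin(-1,1)$ ensures $h_\beta$ crosses it), and tracking whether $A$ genuinely splits that geodesic's two opposing orientations contributes exactly $\pm1$. I would compute the sign by relating $\sgn(c_{AB})$ to $\sgn(c_A), \sgn(c_B)$ through $c_{AB}=c_A a_B + d_A c_B$ together with the sign information encoded in $\alpha, \beta, -d_A/c_A$ lying inside or outside $(-1,1)$; this is the same style of sign bookkeeping used in Lemma \ref{reducelem}. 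After normalizing (w.l.o.g.\ $c_A,c_B>0$ as in that lemma), the cases are distinguished by $\sgn(\alpha)$, $\sgn(\beta)$, and the sign of $AB.i\infty$, and in each case the triangle relation plus the single exceptional crossing yields the claimed right-hand side.

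\textbf{The main obstacle} I anticipate is the careful bookkeeping of orientations and exceptional geodesics: Remark \ref{orientationA} guarantees $A$ preserves $\mathcal N_\theta$ \emph{except} on doubly-oriented geodesics and their $A$-preimages, and I must verify that under the three stated hypotheses exactly the intended one or two such geodesics contribute, with the right sign, rather than an uncontrolled collection. Concretely, the delicate point is showing that the geodesic $h(\alpha, AB.i\infty)$ differs from the "correct" net-image of $h_\beta$ by precisely one doubly-oriented geodesic crossing (the analog of $g_0$, $g_1$ in the proof of Theorem \ref{geomchar}), and that the resulting $\pm1$ matches $-\sgn(c_Ac_Bc_{AB})$ uniformly across all sign cases. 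Verifying finiteness and the clean cancellation of all non-exceptional crossings relies on Lemma \ref{finiteintlem} and the invariance from Lemma \ref{orientationlem}, so those are the tools I would lean on to keep the correction terms under control.
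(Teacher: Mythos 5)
Your plan follows the paper's proof essentially step for step: the paper likewise normalizes $c_A, c_B > 0$, applies the oriented-triangle relation to the vertices $i\infty$, $AB.i\infty = A.\beta$, $\alpha$, recognizes $h(AB.i\infty, \alpha)$ as $A.h(\beta, i\infty)$, and invokes Remark \ref{orientationA} so that the only correction to the intersection count comes from the single doubly-oriented geodesic $\tilde{g}$ from $n_0$ to $n_0 + 2$ (with $n_0 < \beta < n_0 + 2$), whose image's orientation is governed by $\sgn(c_{AB})$ because $-\frac{d_A}{c_A} \in (-1,1)$ keeps the zero of $x \mapsto c_A x + d_A$ outside $(n_0, n_0+2)$. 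The obstacles you flag are resolved exactly as you anticipate: the estimate $\vert c_A \beta + d_A \vert > 1$ (from $\beta \notin (-1,1)$ and $-\frac{d_A}{c_A} \in (-1,1)$) forces $AB.i\infty \in (-1,1)$, so $h(\beta, i\infty)$ meets no $A$-preimages of doubly-oriented geodesics and your single-exceptional-crossing claim is precisely what delivers $I_\theta(h(AB.i\infty,\alpha)) = \sgn(c_{AB}) - \Sf(B)$.
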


\begin{proof}
Upon replacing $A$ by $-A$ and $B$ by $-B$, we may assume w.l.o.g. that $c_A, c_B > 0$. Under these conditions $c_{AB} = c_A a_B + d_A c_B > 0$ if and only if $c_A \beta + d_A > 0$, i.e. $\beta > - \frac{d_A}{c_A}$. 


Note that $\frac{a_{AB}}{c_{AB}} = A.\beta = \frac{a_A}{c_A} - \frac{1}{c_A(c_A \beta + d_A)}$. Since $\vert c_A \beta + d_A \vert \geq c_A \vert \beta \vert - \vert d_A \vert > \vert c_A \vert - \vert d_A \vert \geq 1$ by assumption, it follows that $$\left\vert \frac{a_{AB}}{c_{AB}} \right\vert = \left\vert  \frac{a_A}{c_A} - \frac{1}{c_A(c_A \beta + d_A)} \right\vert < \left\vert  \frac{a_A}{c_A} \right\vert + \frac{1}{c_A} \leq 1,$$
since $\vert a_A \vert + 1 \leq \vert c_A \vert$.  

Consider the geodesic $h = h( a_{AB} / c_{AB}, \alpha)$. The geodesic $h$ is the image of $h(\beta, i \infty)$ under application of $A$. For the oriented triangle consisting of the edges $h(i \infty, a_{AB} / c_{AB})$, $h$ and $h(\alpha, i \infty)$, we have $$I_\theta(h(i \infty, a_{AB} / c_{AB})) + I_\theta(h) + I_\theta (h(\alpha, i \infty)) = 0.$$

Since $I_\theta(h(i \infty, a_{AB} / c_{AB})) = \Sf(AB)$ and $I_\theta( h(\alpha, i \infty)) = - \Sf(A)$, it suffices to prove that $I_\theta(h) = -\Sf(B) + \sgn(c_{AB})$.

By Remark \ref{orientationA}, the sign of each signed intersection of $h(\beta, i \infty)$ with geodesics in $\mathcal{N}_\theta$ does not change under the application of $A$ with the exception of the single doubly-oriented geodesics that $h(\beta, i \infty)$ intersects (as $h = A.h(\beta, i \infty)$ has both endpoints in $(-1, 1)$, the geodesic $h(\beta, i \infty)$ does not intersect any preimage of a geodesic connecting two integers under $A$). For the change of the signed intersection due to $h(\beta, i \infty)$ intersecting a doubly-oriented geodesic, we need to introduce a correction factor to $I_\theta(h(\beta, i \infty))$ to find $I_\theta (h)$. 

Thus consider the geodesic $\tilde{g}$ from $n_0$ to $n_0 + 2$ with $n_0$ an odd integer such that $n_0 < \beta < n_0 + 2$, which is the only doubly-oriented geodesic in $\mathcal{N}_\theta$ that $h(\beta, i \infty)$ intersects. 

Under the matrix $A$, the geodesic $\tilde{g}$ is mapped to the geodesic in $A.\tilde{g}$ connecting $\frac{a_A n_0 + b_A}{c_A n_0 + d_A}$ and $\frac{a_A(n_0 + 2) + b_A}{c_A(n_0 + 2) + d_A}$. To find the orientation of the geodesic $A.\tilde{g}$ in $\mathcal{N}_\theta$, we need to find out whether $\vert c_A n_0 + d_A \vert < \vert c_A n_0 + d_A + 2c_A \vert$ or the contrary holds. Since $c_A > 0$, this only depends on whether $c_A n_0 + d_A$ is negative or not. 

Indeed, the linear function $\R \to \R$, $x \mapsto c_A x + d_A$ switches from negative to positive values at $x = - \frac{d_A}{c_A}$. But this can't happen in the interval $(n_0, n_0 + 2)$, as $- \frac{d_A}{c_A} \in (-1, 1)$ and $\beta \in (n_0, n_0 + 2)$ does not lie in $(-1, 1)$ by assumption. 

Thus, the orientation of $A.\tilde{g}$ in $\mathcal{N}_\theta$ only depends on the sign of $c_A \beta + d_A = c_{AB}$. Namely, as $A.(n_0 + 2) - A.n_0 = \frac{1}{(c_A (n_0 + 2) + d_A) (c_A n_0 + d_A)} > 0$, the orientation of $A.\tilde{g}$ is positive, if $c_{AB}$ is negative, and vice versa. Note further that $A.\infty - A.x = \frac{1}{c_A(c_A x + d_A)}$ for any $x \in \R$. Since $c_An_0 + d_A$ and $c_A (n_0 + 2) + d_A$ have the same sign, the numbers $\alpha - A.n_0$ and $\alpha - A.(n_0 + 2)$ have the same sign as well. In other words, $\alpha$ lies outside the interval bounded by $A.n_0$ and $A.(n_0 + 2)$ or the geodesic $h$ from $\frac{a_{AB}}{c_{AB}}$ to $\alpha$ is oriented outwards of the geodesic $A.\tilde{g}$. 

Now if the orientation is positive, then the geodesic $h$ intersects $A.\tilde{g}$ with $A.\tilde{g}$ pointing to the left and if the orientation is negative, then $h$ intersects $A.\tilde{g}$ with $A.\tilde{g}$ pointing to the right, i.e. $$I_\theta(h) = \sgn(c_{AB}) + I_\theta(h(\beta, i \infty)) = \sgn(c_{AB}) - \Sf(B).$$
\end{proof}

All cases that we need to consider to prove Theorem \ref{radecocthm} are covered in the lemmas and propositions above. We are only left with reducing the general case to them.

\begin{proof}[Proof of Theorem \ref{radecocthm}]
Suppose $A \neq B^{-1}$ otherwise the claim follows by Proposition \ref{invprop}. 

Write \begin{equation} \label{word}
    A = \pm T^{2n_0} S T^{2n_1} S \cdots T^{2n_r} S T^{2n_{r + 1}} \; \mathrm{and} \; B = \pm T^{2m_0} S T^{2m_1} S \cdots T^{2m_t} S T^{2m_{t + 1}}
\end{equation}
for $r, t \geq 0$ and $n_1, ..., n_r, m_1, ..., m_t \neq 0$.
We may assume that $- \frac{d_A}{c_A} \in (-1, 1)$ and $\frac{a_A}{c_A} \in (-1, 1)$ by Lemma \ref{wordlem} (i.e. that $n_0 = n_{r + 1} = 0$) upon replacing $A$ with $T^{-2n_0} A T^{-2n_{r + 1}}$ and $B$ with $T^{2n_{r + 1}} B$. Again, we may assume w.l.o.g. that $c_A, c_B > 0$. 

If $\vert B.i\infty \vert > 1$ (i.e. $m_0 \neq 0$ in (\ref{word}) by Lemma \ref{wordlem}), the claim follows from Lemma \ref{helpprop}. Suppose thus that $\vert B.i \infty \vert < 1$, i.e. that $B = \pm S T^{2m_1} S \cdots T^{2m_t} S T^{2m_{t + 1}}$. We may now replace $A$ by $A' = AS$ and $B$ by $B' = SB$ as in Lemma \ref{reducelem}. 

Again, by replacing $A' = \pm ST^{2n_1} S \cdots S T^{2n_{r}}$ by $A' T^{-2n_r}$ and $B'$ by $$T^{2n_r}B' = \pm T^{2(m_1 + n_r)} S \cdots T^{2m_t} S T^{2m_{t + 1}},$$ we may assume that $- \frac{d_{A'}}{c_{A'}} \in (-1, 1)$ and that $c_{A'}, c_{B'} > 0$. If $\vert B'.i \infty \vert > 1$, i.e. if $m_1 \neq - n_r$, we may use Lemma \ref{helpprop}. Otherwise, repeat the argument until we reach $\vert B'.i\infty \vert > 1$. 

This process will end, for if $r \neq t$, one of the matrices will at some point be equal to the identity, for which the claim is trivial. If $r = t$, we must have $-m_i \neq n_{t + 1 - i}$ for an $i = 0, ..., t + 1$, since we excluded the case $A = B^{-1}$.
\end{proof}

\begin{cor}
Let $\Phi : \G_\theta \to \Z$ be the classical Dedekind symbol restricted to the subgroup $\G_\theta$. For the map $\Sf : \G_\theta \to \Z$ we have $$\Sf = \Phi - \chi_\theta,$$
where $\chi_\theta$ is the unique homomorphism $\G_\theta \to \Z$ given by $\chi_\theta(T^2) = 2$ and $\chi_\theta(S) = 0$. 
\end{cor}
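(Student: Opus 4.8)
The plan is to deduce the corollary directly from Theorem \ref{radecocthm}, exploiting that $\Phi$ and $\Sf$ are two splittings of the \emph{same} cocycle: equation (\ref{radecoceq}) holds on all of $\SL$ (hence on the subgroup $\G_\theta$), and Theorem \ref{radecocthm} furnishes the identical right-hand side $-\sgn(c_A c_B c_{AB})$ for $\Sf$. Thus the difference of the two splittings should be a genuine homomorphism, which I then identify on generators.

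First I would introduce $g := \Phi - \Sf : \G_\theta \to \Z$ and verify that $g$ is a homomorphism. For $A, B \in \G_\theta$, subtracting the cocycle identity of Theorem \ref{radecocthm} from (\ref{radecoceq}) cancels the Rademacher cocycle on the right, yielding
$$g(AB) - g(A) - g(B) = \big(\Phi(AB) - \Phi(A) - \Phi(B)\big) - \big(\Sf(AB) - \Sf(A) - \Sf(B)\big) = 0,$$
so that $g(AB) = g(A) + g(B)$. Here I use that (\ref{radecoceq}) applies a fortiori on $\G_\theta \subseteq \SL$.

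Next I would evaluate $g$ at the two generators of $\G_\theta = \langle T^2, S\rangle$. For $S = \SmallMatrix{0}{-1}{1}{0}$, definition (\ref{phidefi}) gives $\Phi(S) = \frac{0+0}{1} - 12\,s(0,1) = 0$, while property (1) of the characterization of $\Sf$ gives $\Sf(S) = 0$, so $g(S) = 0$. For $T^2 = \SmallMatrix{1}{2}{0}{1}$ the lower-left entry vanishes, so $\Phi(T^2) = 2$ by (\ref{phidefi}), whereas property (2) (with $n = 1$) gives $\Sf(T^2) = \Sf(I) = 0$; hence $g(T^2) = 2$.

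Finally, since $T^2$ and $S$ generate $\G_\theta$ and the target $\Z$ is abelian, a homomorphism is pinned down by its values on these two generators. Therefore $g$ coincides with the homomorphism $\chi_\theta$ determined by $\chi_\theta(T^2) = 2$, $\chi_\theta(S) = 0$ (and in fact $g$ confirms the existence of such a homomorphism), so $\Phi - \Sf = \chi_\theta$, i.e. $\Sf = \Phi - \chi_\theta$. I do not anticipate a genuine obstacle here: the statement is a formal consequence of Theorem \ref{radecocthm}, and the only points needing care are confirming that (\ref{radecoceq}) restricts to $\G_\theta$ and carrying out the elementary evaluations of $\Phi$ and $\Sf$ at $T^2$ and $S$.
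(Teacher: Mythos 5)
Your proposal is correct and follows essentially the same route as the paper: both observe that $\Phi$ and $\Sf$ split the same Rademacher cocycle on $\G_\theta$ (so $\Phi - \Sf$ is a homomorphism by subtracting (\ref{radecoceq}) from Theorem \ref{radecocthm}), and then pin down the homomorphism via $\Phi(T^2) = 2$, $\Sf(T^2) = 0$, $\Phi(S) = \Sf(S) = 0$ on the generators of $\G_\theta = \langle T^2, S\rangle$. Your write-up is just a more detailed version of the paper's argument (the remark that $\Z$ is abelian is unnecessary, since any homomorphism is determined by its values on a generating set, but this is harmless).
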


\begin{proof}
We have $\Phi(AB) - \Phi(A) - \Phi(B) = - \sgn(c_A c_B c_{AB})$ by (\ref{radecoceq}). Hence, $\chi_\theta = \Phi - \Sf$ is a group homomorphism by Theorem \ref{radecocthm}. The values of $\chi_\theta$ at the generators of $\G_\theta$ are obtained by $\Phi(T^2) = 2$, $\Phi(S) = 0$ and $\Sf(T^2) = 0$, $\Sf(S) = 0$. 
\end{proof}


\begin{remark}
The same arguments work for the function $\Sf_4$ as well. In particular, the function $\Sf_4$ satisfies the generalized reciprocity law in Theorem \ref{radecocthm}.
\end{remark}

\subsection*{Acknowledgments} We would like to thank \"Ozlem Imamo\={g}lu and Tim Gehrunger for helpful comments on the presentation of this note. The author was supported by the SNF project 200021\_185014.

\newpage

\end{document}